\newtheorem{thm}{Theorem}[section]
\newtheorem{prop}[thm]{Proposition}
\newtheorem{cor}[thm]{Corollary}
\newtheorem{lem}[thm]{Lemma}
\theoremstyle{definition}
\newtheorem{defi}[thm]{Definition}
\newtheorem{conj}[thm]{Conjecture}
\newtheorem{rem}[thm]{Remark}
\theoremstyle{plain}
\newcommand{\lla}{\langle\!\langle}
\newcommand{\rra}{\rangle\!\rangle}
\renewcommand{\phi}{\varphi}
\newcommand{\alt}{\mathop{\mathrm{Alt}}}
\newcommand{\nrd}{\mathop{\mathrm{Nrd}}}
\newcommand{\End}{\mathop{\mathrm{End}}}
\newcommand{\id}{\mathop{\mathrm{id}}}
\newcommand{\ind}{\mathop{\mathrm{ind}}}
\newcommand{\disc}{\mathop{\mathrm{disc}}}
\newcommand{\ad}{\mathrm{Ad} }
\author{A.-H. Nokhodkar}
\title{Pfister involutions in characteristic two}
\date{}
\begin{document}
\maketitle

\begin{abstract}
In characteristic two, it is shown that a central simple algebra of degree equal to a power of two with anisotropic ortho\-gonal involution is totally decomposable, if it becomes either anisotropic or metabolic over all extensions of the ground field.
A similar result is obtained for the case where this
algebra with involution is Brauer-equivalent to a quaternion algebra and it becomes adjoint to a bilinear Pfister form over all splitting fields of the algebra.
\\

\noindent
\emph{Keywords:} Totally decomposable algebra, quaternion algebra, involution, Pfister form, characteristic two.\\

\noindent
\emph{MSC:} 16W10, 16K50, 11E39, 12F05. \\
\end{abstract}

\section{Introduction}
An algebra with involution $(A,\sigma)$ is called {\it totally decomposable} if it decomposes as a tensor product of quaternion algebras with involution.
Every split totally decomposable algebra with involution is adjoint to a bilinear Pfister form.
This is known as the Pfister Factor Conjecture, formulated by D. B. Shapiro \cite{shapiro} and proved by K. J. Becher \cite{becher} (see \cite{mn} for a proof in characteristic two).
A more general conjecture was stated in \cite{bayer} as follows: for an algebra with involution $(A,\sigma)$ of degree $2^n$ over a field $F$ of characteristic not two the following statements are equivalent:
(1) $(A,\sigma)$ is totally decomposable.
(2) For every splitting field $K$ of $A$, $(A,\sigma)_K$ is adjoint to a Pfister form.
(3) For every field extension $K/F$, $(A,\sigma)_K$ is either anisotropic or hyperbolic.
The implications $(1)\Rightarrow (2)$ and $(1)\Rightarrow (3)$ and the equivalence $(2)\Leftrightarrow(3)$ are already proved in the literature (see \cite[Theorem 1]{becher}, \cite[(1.1)]{kar} and \cite[(3.2)]{black}).
However, the implication $(2)\Rightarrow (1)$ or $(3)\Rightarrow (1)$ is still open in general and is solved only for certain special cases (see for example \cite[(2.10)]{bayer} and \cite[Theorem 2]{becher}).

A similar conjecture may be considered for orthogonal involutions in characteristic two.
Since an orthogonal involution in characteristic two can not be hyperbolic, one may replace the hyperbolicity condition with the metabolicity.
Also, using \cite[(5.5)]{dolphin} one can find a metabolic bilinear form $\mathfrak{b}$ of dimension $2^n$ over a field $F$ which is not similar to any Pfister form.
This can be used to show that the implication $(3)\Rightarrow(2)$ is not true in characteristic two.
However, by \cite[(5.5)]{lagh} an anisotropic symmetric bilinear form  of dimension $2^n$ over $F$, which is either anisotropic or metabolic over all extensions of $F$, is similar to a Pfister form.
Considering this result a conjecture may be formulated as follows (see \cite{dolphin}):
\begin{conj}\label{con}
Let $(A,\sigma)$ be a central simple algebra of degree $2^n$ with orthogonal involution over a field $F$ of characteristic two.
If $\sigma$ is anisotropic, then the following statements are equivalent.
\begin{itemize}
  \item [(1)] $(A,\sigma)$ is totally decomposable.
  \item [(2)] For every splitting field $K$ of $A$, $(A,\sigma)_K$ is adjoint to bilinear Pfister form.
  \item [(3)] For every field extension $L/F$, $(A,\sigma)_L$ is either anisotropic or metabolic.
\end{itemize}
\end{conj}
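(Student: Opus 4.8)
\emph{Sketch of the approach.} The equivalence splits into several implications, of which $(3)\Rightarrow(1)$ is the heart of the matter; the remaining ones are either known or comparatively formal. The implication $(1)\Rightarrow(2)$ is exactly the characteristic-two Pfister Factor Conjecture proved in \cite{mn}: once $A$ is split, a totally decomposable orthogonal involution becomes adjoint to a bilinear Pfister form. For $(2)\Rightarrow(3)$ I would argue by passing, for an arbitrary extension $L/F$, to a further extension that splits $A$; over such a field $(A,\sigma)$ is adjoint to a bilinear Pfister form by (2), and a bilinear Pfister form is either anisotropic or metabolic. The standing hypothesis that $\sigma$ is anisotropic is what lets me transfer this dichotomy back down to $L$, and it is precisely this hypothesis that rules out the counterexamples to $(3)\Rightarrow(2)$ coming from \cite[(5.5)]{dolphin}. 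Thus the real work is $(3)\Rightarrow(1)$.

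For $(3)\Rightarrow(1)$ I would induct on $n$, the case $n\le 1$ being immediate since a quaternion algebra equipped with an involution is by definition totally decomposable. The base mechanism for the induction is the split case: if $A$ is split then $(A,\sigma)$ is adjoint to an anisotropic symmetric bilinear form $\mathfrak{b}$ of dimension $2^n$, and hypothesis (3) says that $\mathfrak{b}$ is anisotropic or metabolic over every extension of $F$; by \cite[(5.5)]{lagh} such a form is similar to a bilinear Pfister form, and an involution adjoint to a Pfister form is totally decomposable. So the entire difficulty is concentrated in the descent from the split situation to a general, division-like $A$.

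The plan for the inductive step is to extract a single $\sigma$-invariant quaternion factor. Concretely I would look for a $\sigma$-stable quaternion subalgebra $Q\subseteq A$ on which $\sigma$ restricts to an orthogonal involution; by the double centralizer theorem this yields $(A,\sigma)\cong (Q,\sigma|_Q)\otimes(C,\sigma|_C)$ with $C=C_A(Q)$ of degree $2^{n-1}$, and it then remains to check that $(C,\sigma|_C)$ is anisotropic and again satisfies (3), so that the induction hypothesis applies and $(C,\sigma|_C)$ — hence $(A,\sigma)$ — is totally decomposable. The candidate subalgebra should be produced from the structure forced by (3): over a splitting field $\sigma$ is adjoint to a bilinear Pfister form $\langle\!\langle a_1,\dots,a_n\rangle\!\rangle$, whose slots $a_i$ predict the quaternion factors, and the task is to realize one such factor already over $F$ using an element of $A$ whose existence is guaranteed by the anisotropy/metabolicity behaviour.

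The main obstacle is exactly this extraction together with the verification that the hypothesis descends to the centralizer. Producing a $\sigma$-invariant quaternion subalgebra over the non-split base field is the step with no analogue in the split case, and it is essentially the same difficulty that keeps $(2)\Rightarrow(1)$ open in characteristic not two. I expect the anisotropy hypothesis and condition (3) over \emph{all} extensions to supply enough rigidity to locate the factor, and that once $Q$ is split off one can show $(C,\sigma|_C)$ cannot become isotropic-but-not-metabolic over any extension — otherwise $(Q,\sigma|_Q)\otimes(C,\sigma|_C)$ would violate (3). When $A$ is merely Brauer-equivalent to a quaternion algebra, $A\cong M_{2^{n-1}}(Q)$ carries such a quaternion factor explicitly, which is why the companion result establishing $(2)\Rightarrow(1)$ in that special case is within reach even though the general $(2)\Rightarrow(1)$ is not.
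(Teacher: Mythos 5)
This statement is a \emph{conjecture}, and the paper does not prove the full equivalence: it establishes $(1)\Rightarrow(2)$, $(1)\Rightarrow(3)$ and, as its main new result, $(3)\Rightarrow(1)$ (Theorem 4.6), while $(2)\Rightarrow(1)$ is obtained only when $A$ is Brauer-equivalent to a quaternion algebra or when $\sigma$ is direct. Your proposal, by contrast, claims all the implications, and the two places where it goes beyond what is known are exactly the places where it has no actual argument. First, your $(2)\Rightarrow(3)$: after passing to a splitting field of $A_L$ you get a Pfister form, hence anisotropic-or-metabolic up there, but the step ``the anisotropy of $\sigma$ lets me transfer this dichotomy back down to $L$'' is not formal --- it is precisely the descent of isotropy/metabolicity from a splitting field that constitutes the open part of the problem (the characteristic-zero analogue rests on Karpenko's theorem and its refinement in \cite{black}, and no such statement is invoked or proved here). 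Declaring this ``comparatively formal'' while simultaneously noting that $(2)\Rightarrow(1)$ is open is inconsistent: under the standing hypotheses $(3)\Rightarrow(1)$, so $(2)\Rightarrow(3)$ is equivalent to the open implication.

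Second, your $(3)\Rightarrow(1)$ is a plan, not a proof: everything is made to rest on extracting a $\sigma$-stable quaternion subalgebra of $A$ over $F$, which you yourself identify as ``the main obstacle'' and do not carry out; moreover the subsidiary claim that condition (3) descends to the centralizer $C=C_A(Q)$ is also unjustified (an isotropic non-metabolic $(C,\sigma|_C)_L$ need not make the tensor product violate (3)). The paper's route is genuinely different and avoids the extraction problem entirely: since $\exp A\le 2$, Teichm\"uller's theorem gives a tower $F=K_0\subseteq K_1\subseteq\cdots\subseteq K_m$ of separable quadratic extensions with $K_m$ splitting $A$; hypothesis (3) together with \cite[(5.6)]{dolphin} forces $(A,\sigma)_{K_i}$ to stay anisotropic all the way up; the split case \cite[(8.3)]{dolphin} (your Laghribi argument) makes $(A,\sigma)_{K_m}$ totally decomposable; and then total decomposability is descended one separable quadratic step at a time by Proposition 4.2, whose proof uses the canonical subalgebra $\Phi(A,\sigma)$ of \cite{mn2} and the fact that anisotropic totally decomposable orthogonal involutions are direct. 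If you want to salvage your write-up, you should restrict your claims to $(1)\Leftrightarrow(3)$ and $(1)\Rightarrow(2)$, and replace the quaternion-factor extraction by a quadratic-descent statement of the type of Proposition 4.2.
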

The implication $(1)\Rightarrow(2)$ follows from \cite[(3.6)]{mn} and $(1)\Rightarrow(3)$ was proved in \cite[(6.2)]{dolphin}.
The aim of this work is to study some other implications of this conjecture.
First, we prove Conjecture \ref{con} for the case where $A$  is Brauer-equivalent to a quaternion algebra.
This result is an analogue of \cite[Theorem 2]{becher}, stated in characteristic different from $2$.
We then prove the implication $(3)\Rightarrow(1)$ (for arbitrary $A$) in Theorem \ref{main2}.
We finally consider this conjecture for direct involutions defined in \cite{dolphin2} as follows:
an involution $\sigma$ on $A$ is called {\it direct} if $\sigma(a)a\in\alt(A,\sigma)$ implies that $a=0$ for $a\in A$.
It is clear that every direct involution is anisotropic.
According to \cite[(6.1)]{dolphin}, the converse is also true for totally decomposable orthogonal involutions in characteristic two.
Our last result shows that Conjecture \ref{con} is true in the case where $\sigma$ is direct (see Theorem \ref{main3}).
\section{Preliminaries}
Throughout this work, $F$ denote a field of characteristic $2$.

Let $A$ be a finite-dimensional central simple algebra over $F$.
The square root of $\dim_FA$ is called the {\it degree} of $A$ and is denoted by $\deg_F A$.
If $B$ is a subalgebra of $A$, the centralizer of $B$ in $A$ is denoted by $C_A(B)$.
According to the Wedderburn's theorem, $A$ is isomorphic to the matrix algebra $M_n(D)$ for some division $F$-algebra $D$.
The algebra $D$ is called the {\it division algebra component} of $A$.
Also, the integer $n$ is called the {\it co-index} of $A$ and is denoted by $\mathfrak{i}(A)$.
In other words, $\mathfrak{i}(A)=\frac{\deg_FA}{\ind A}$, where $\ind A$ is the Schur index of $A$.
A {\it quaternion algebra} over $F$ is a central simple $F$-algebra of degree $2$.
Every quaternion algebra has a basis $(1,u,v,w)$, called a {\it quaternion basis}, satisfying $u^2+u\in F$, $v^2\in F^\times$ and $w=uv=vu+v$ (see \cite[p. 25]{knus}).

By an {\it involution} on $A$ we mean an antiautomorphism of $A$ of order $2$.
An involution is said to be of {\it the first kind}, if it leaves $F$ elementwise invariant.
Involutions of the first kind are either {\it symplectic} or {\it orthogonal} (see \cite[(2.5)]{knus}).
An involution $\sigma$ on $A$ is called {\it isotropic} if $\sigma(a)a=0$ for some nonzero element $a\in A$.
Otherwise, $\sigma$ is called {\it anisotropic}.
We say that $\sigma$ is {\it metabolic} if there exists an idempotent $e\in A$ such that $\sigma(e)e=0$ and $\dim_FeA=\frac{1}{2}\dim_FA$.
Metabolic involutions were introduced first in \cite{berhuy1}.

If $(A,\sigma)$ is an algebra with involution over $F$ and $K/F$ is a field extension, we denote $(A,\sigma)\otimes_F(K,\id)$ by $(A,\sigma)_K$.
We also use the notation
$\alt(A,\sigma)=\{x-\sigma(x)\mid x\in A\}$.
If $\sigma$ is orthogonal and $A$ is of even degree, the {\it discriminant} of $\sigma$ is defined as $\disc\sigma=\nrd_A(x)F^{\times}/F^{\times2}$, where $x\in\alt(A,\sigma)$ is a unit and $\nrd_A(x)$ is the reduced norm of $x$ in $A$.
Finally, for a symmetric bilinear space  $(V,\mathfrak{b})$ over $F$, the pair $(\End_F(V),\sigma_\mathfrak{b})$ is denoted by $\ad(\mathfrak{b})$, where $\sigma_\mathfrak{b}$ is the adjoint involution of $\End_F(V)$ with respect to $\mathfrak{b}$ (see \cite[p. 2]{knus}).

\section{Split factors}
\begin{lem}\label{desort}
Let $K/F$ be a finite field extension satisfying $K^2\subseteq F$ and let $(Q,\sigma)$ be a quaternion algebra with orthogonal involution over $K$.
If $v^2\in F^\times$ for some $v\in\alt(Q,\sigma)$, then there exists a quaternion $F$-subalgebra $Q_0$ of $Q$ such that $\sigma(Q_0)=Q_0$.
In addition, if $Q$ splits, then $Q_0$ can be chosen to be split.
\end{lem}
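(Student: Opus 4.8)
The plan is to pin down the action of $\sigma$ on a well-chosen quaternion basis of $Q$ and then to ``descend'' the separable generator to $F$ using the hypothesis $K^2\subseteq F$. First I would record the structural consequences of $\sigma$ being orthogonal: in characteristic two an orthogonal involution on a quaternion algebra satisfies $1\notin\alt(Q,\sigma)$ and $\dim_K\alt(Q,\sigma)=1$, while every element of $\alt(Q,\sigma)$ is fixed by $\sigma$ (if $w=x+\sigma(x)$ then $\sigma(w)=w$). Since $v\in\alt(Q,\sigma)$ and $v^2=c\in\mg F$ forces $v\neq0$, this gives $\alt(Q,\sigma)=Kv$, $\sigma(v)=v$, and $v\notin K$ (otherwise $1=v^{-1}v\in\alt(Q,\sigma)$). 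As $v\notin K$ and $v^2=c$, the reduced characteristic polynomial of $v$ is $X^2-c$, so $\trd(v)=0$ and $v$ is a pure quaternion. Hence $Q$ admits a quaternion basis $(1,u,v,uv)$ whose inseparable generator is the given $v$, i.e. $u^2+u=a\in K$, $v^2=c\in\mg F$ and $uv=vu+v$.

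Next I would compute $\sigma$ on this basis. Since $u+\sigma(u)\in\alt(Q,\sigma)=Kv$, we may write $\sigma(u)=u+\nu v$ with $\nu\in K$. Applying $\sigma$ to $uv$ gives $\sigma(uv)=\sigma(v)\sigma(u)=v(u+\nu v)=vu+\nu c$, whence $uv+\sigma(uv)=uv+vu+\nu c=v+\nu c$ must again lie in $Kv$; comparing the coefficient of $1$ forces $\nu c=0$, so $\nu=0$ and $\sigma(u)=u$. The crucial device is now to replace $u$ by $u_0:=u+a$. Because $a\in K$ is central and fixed by $\sigma$, we still have $\sigma(u_0)=u_0$ and $u_0v=vu_0+v$, while $u_0^2+u_0=a^2$. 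Here the hypothesis $K^2\subseteq F$ enters decisively: $a^2\in K^2\subseteq F$ and $c\in\mg F$, so $(1,u_0,v,u_0v)$ spans an $F$-subalgebra $Q_0$ with quaternion relations $u_0^2+u_0=a^2\in F$, $v^2=c\in\mg F$, $u_0v=vu_0+v$. The canonical map from the abstract quaternion $F$-algebra defined by these relations onto $Q_0$ is nonzero, hence injective by simplicity, so $Q_0$ is a quaternion $F$-subalgebra of $Q$; and $\sigma(Q_0)=Q_0$ since $\sigma$ fixes $u_0$ and $v$ and $\sigma(u_0v)=vu_0=u_0v+v\in Q_0$. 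This proves the first assertion.

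For the refinement when $Q$ splits, I would pass to bilinear forms. Writing $(Q,\sigma)\cong\ad(\mathfrak{b})$ for a binary non-alternating symmetric bilinear form $\mathfrak{b}=\la\beta_1,\beta_2\ra$ over $K$, a direct matrix computation of $\alt$ shows that the generator of $\alt(Q,\sigma)$ squares to $\beta_1/\beta_2$; comparing with $v^2=c$ gives $\beta_1/\beta_2\in cK^{2}$, so after similarity $\mathfrak{b}\cong\la1,c\ra$, a form defined over $F$. Choosing an orthogonal basis $e_1,e_2$ of $(V,\mathfrak{b})$ with Gram matrix $\mathrm{diag}(1,c)\in M_2(F)$, the $F$-subspace $V_0=Fe_1\oplus Fe_2$ yields $Q_0:=\End_F(V_0)\cong M_2(F)\subseteq\End_K(V)=Q$, which is split; and since the Gram matrix lies in $M_2(F)$, the adjoint involution maps $M_2(F)$ into itself, so $\sigma(Q_0)=Q_0$.

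The main obstacle is precisely this split refinement: the subalgebra produced by the first construction is the $F$-quaternion algebra with $u_0^2+u_0=a^2$, $v^2=c$, and this need not split even when $Q$ does, so one cannot simply reuse $Q_0\ni v$. The saving observation is that the lemma only demands \emph{some} $\sigma$-stable split $F$-quaternion subalgebra, and the bilinear-form picture of the split case exhibits the visibly split, visibly $\sigma$-stable copy $M_2(F)\subseteq M_2(K)$. The other delicate points are the determination $\sigma(u)=u$, which rests on $\dim_K\alt(Q,\sigma)=1$, and the substitution $u\mapsto u+a$, which trades the separable parameter $a\in K$ for $a^2\in F$.
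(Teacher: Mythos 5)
Your proof is correct. For the first assertion the paper simply cites \cite[(6.1)]{mn2}, whereas you give a self-contained construction: completing the given $v$ to a quaternion basis $(1,u,v,uv)$, using $\dim_K\alt(Q,\sigma)=1$ to force $\sigma(u)=u$, and then making the crucial substitution $u\mapsto u_0=u+a$ so that $u_0^2+u_0=a^2\in K^2\subseteq F$ — this is exactly where the hypothesis $K^2\subseteq F$ is consumed, and the resulting $F$-span of $(1,u_0,v,u_0v)$ is visibly a $\sigma$-stable quaternion $F$-subalgebra. All the intermediate claims check out (in particular $v\notin K$, $\trd(v)=0$, and the coefficient comparison forcing $\nu=0$). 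What your explicit route buys is independence from \cite{mn2}; what it costs is that you must justify the completion of a trace-zero invertible $v\notin K$ to a quaternion basis, which you assert as standard — it is, but a one-line argument (conjugation by $v$ is an order-two automorphism whose image under $\mathrm{Int}(v)-\mathrm{id}$ is all of $K[v]\ni 1$) would make it airtight. For the split refinement your argument is essentially identical to the paper's: you identify $(Q,\sigma)$ with $\ad(\la 1,c\ra_K)$ (the paper invokes $\disc\sigma=cK^{\times2}$ and \cite[(7.3), (7.4)]{knus}, you verify the same fact by the matrix computation $w^2=\beta_1/\beta_2$) and then descend via $\ad(\la 1,c\ra_K)\simeq\ad(\la 1,c\ra)_K$. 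Your closing remark correctly flags the one trap: the $\sigma$-stable subalgebra $[a^2,c)_F$ from the first construction need not be split, so the split case genuinely requires the separate bilinear-form descent.
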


\begin{proof}
The first statement follows from \cite[(6.1)]{mn2}.
To prove the second one let $\alpha=v^2\in F^\times$, so that $\disc\sigma=\alpha F^{\times2}$.
Let $\langle1,\alpha\rangle$ be the diagonal bilinear form $\mathfrak{b}((x_1,x_2),(y_1,y_2))=x_1y_1+\alpha x_2y_2$ over $F$.
Then $(A,\sigma)\simeq\ad(\langle1,\alpha\rangle_K)$ by \cite[(7.3 (3)) and (7.4)]{knus}.
The result therefore follows from the isomorphism $\ad(\langle1,\alpha\rangle_K)\simeq\ad(\langle1,\alpha\rangle)_K$.
\end{proof}

\begin{defi}
Let $(A,\sigma)$ be a totally decomposable algebra with involution over $F$.
We say that $(A,\sigma)$ has a decomposition with $s$ {\it split factors}
if there exists a decomposition $(A,\sigma)\simeq\bigotimes_{i=1}^{n}(Q_i,\sigma_i)$ such that $Q_i\simeq M_2(F)$ for $i\leqslant s$ and $Q_i$ is a quaternion division algebra for $i>s$.
\end{defi}
Note that if $(A,\sigma)$ has a decomposition with $s$ split factors, then $\mathfrak{i}(A)$ is a multiple of $2^s$.

The next result follows from \cite[\S9, Theorem 12 (c)]{draxl}.

\begin{lem}\label{dr}
Let $A$ be a central simple algebra over $F$.
If $K/F$ is a finite field  extension, then $\mathfrak{i}(A_K)=[K:F]\cdot\mathfrak{i}(A)$ if and only if $K$ can be embedded into the division algebra component of $A$.
\end{lem}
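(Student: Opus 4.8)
The plan is to reduce the statement to a classical embedding criterion for the division algebra component and then prove that criterion directly. Writing $A\simeq M_n(D)$ with $D$ the division algebra component, we have $\mathfrak{i}(A)=n$ and $A_K\simeq M_n(D_K)$, so $\mathfrak{i}(A_K)=n\cdot\mathfrak{i}(D_K)$ with $\mathfrak{i}(D_K)=\frac{\deg_FD}{\ind D_K}$. Setting $d=\deg_FD=\ind A$, the asserted equivalence becomes
\[
\ind D_K=\frac{d}{[K:F]}\iff K\hookrightarrow D ,
\]
an $F$-embedding of $K$ into $D$. I would first record the general lower bound $\ind D_K\geqslant d/[K:F]$: if $M'$ is a maximal subfield of the division component of $D_K$, then $M'$ splits $D_K$, hence splits $D$ over $F$ because $D\otimes_FM'\simeq D_K\otimes_KM'$; since the index divides the degree of any finite splitting field, $d\mid[M':F]=\ind D_K\cdot[K:F]$. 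Thus for each direction it suffices to control the opposite inequality.

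For the implication $K\hookrightarrow D\Rightarrow\ind D_K=d/[K:F]$, I would view $K$ as a subfield of $D$ and pass to its centralizer $C=C_D(K)$. By the double centralizer theorem $C$ is central simple over $K$ with $\deg_KC=d/[K:F]$, and being a subalgebra of the division algebra $D$ it is itself division. A maximal subfield $M$ of $C$ then contains the center $K$ and satisfies $[M:K]=d/[K:F]$, so $K\subseteq M\subseteq D$ with $[M:F]=d$; hence $M$ is a maximal subfield of $D$ and splits $D$. As $M\supseteq K$, it also splits $D_K$, giving $\ind D_K\mid[M:K]=d/[K:F]$, and together with the lower bound this forces equality.

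The converse $\ind D_K=d/[K:F]\Rightarrow K\hookrightarrow D$ is the part I expect to be the main obstacle, since one must manufacture an embedding from a purely numerical hypothesis. My approach is to let $D'$ be the division component of $D_K$, so that $\deg_KD'=d/[K:F]$, and to choose a maximal subfield $M$ of $D'$; it contains the center $K$ and has $[M:F]=[M:K][K:F]=d$. As before $M$ splits $D'$, hence $D_K$, hence $D$ over $F$. It then remains to prove the key fact that a splitting field $M$ of $D$ with $[M:F]=\deg D$ embeds into $D$ as a maximal subfield: realizing $D\otimes_FM\simeq\End_M(V)$, the module $V$ is simple over $D$ of $F$-dimension $d^2$, so $V\simeq D$ as a left $D$-module, whence $M$ embeds into $\End_D(V)\simeq D^{\mathrm{op}}$ and therefore into $D$. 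Applying this to $M\supseteq K$ yields $K\hookrightarrow D$. I would stress that this module argument is insensitive to the (in)separability of $K/F$, which is essential here, since in the intended applications (as in Lemma \ref{desort}) the relevant extensions satisfy $K^2\subseteq F$ and are purely inseparable; no appeal to primitive elements or to the étale structure of $K\otimes_FK$ is made.
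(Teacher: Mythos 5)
Your proof is correct, but it is organized quite differently from the paper's: the paper gives no argument at all and simply invokes Draxl's \emph{Skew fields}, \S9, Theorem~12(c), of which the stated lemma is a direct consequence, whereas you reprove the underlying classical criterion from scratch. Your reduction to the equivalence $\ind D_K=d/[K:F]\iff K\hookrightarrow D$ via $\mathfrak{i}(A_K)=n\cdot\mathfrak{i}(D_K)$ is clean, the lower bound $\ind D_K\geqslant d/[K:F]$ via a maximal subfield of the division component of $D_K$ is the standard one, the forward direction via the centralizer $C_D(K)$ and the double centralizer theorem is exactly right, and the converse correctly isolates the only delicate point, namely that a degree-$d$ splitting field of $D$ embeds into $D$; your module-theoretic argument ($V\simeq D$ as a left $D$-module by a dimension count, hence $M\hookrightarrow\End_D(V)\simeq D^{\mathrm{op}}$, and a commutative subfield of $D^{\mathrm{op}}$ is one of $D$) establishes this without any separability hypothesis. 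That last point is worth emphasizing, as you do: the applications in the paper (Proposition~\ref{main} and Corollary~\ref{bi}) apply the lemma to subfields of $\Phi(A,\sigma)$, which are purely inseparable over $F$, so a proof relying on primitive elements or \'etale algebras would not suffice. What your route buys is a self-contained, characteristic-free verification; what the paper's route buys is brevity by outsourcing to a standard reference. The only cosmetic quibble is your phrase ``the module $V$ is simple over $D$'': simplicity is not needed and is really a consequence of the freeness of modules over a division ring together with the dimension count $\dim_FV=d^2=\dim_FD$, which is the argument you actually use.
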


Let $(A,\sigma)\simeq\bigotimes_{i=1}^n(Q_i,\sigma_i)$ be a totally decomposable algebra with ortho\-gonal involution over $F$.
As observed in \cite{mn2}, there exists a unique, up to isomor\-phism, subalgebra $\Phi(A,\sigma)\subseteq F+\alt(A,\sigma)$ of dimension $2^n$ such that (i) $C_A(\Phi(A,\sigma))=\Phi(A,\sigma)$; (ii) $\Phi(A,\sigma)$ is generated, as an $F$-algebra, by $n$ elements; and (iii) $x^2\in F$ for every $x\in\Phi(A,\sigma)$ (see \cite[(4.6), (5.10) and (5.11)]{mn2}).
Note that if $v_i\in\alt(Q_i,\sigma_i)$ is a unit for $i=1,\cdots,n$, then $\Phi(A,\sigma)\simeq F[v_1,\cdots,v_n]$.

\begin{prop}\label{main}
Let $(A,\sigma)$ be a totally decomposable algebra with orthogonal involution over $F$ and let $\mathfrak{i}(A)=2^s$ for some nonnegative integer $s$.
Then $(A,\sigma)$ has a decomposition with $s$ split factors if and only if
$\Phi(A,\sigma)$ has a subfield $L$ containing $F$ with $[L:F]=2^{n-s}$ which splits $A$.
\end{prop}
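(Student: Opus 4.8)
The plan is to translate the existence of a decomposition with $s$ split factors into the existence of a purely inseparable maximal subfield of the division component of $A$ sitting inside $\Phi(A,\sigma)$, and conversely. Throughout write $\Phi=\Phi(A,\sigma)$ and note that $\ind A=2^{n}/\mathfrak{i}(A)=2^{\,n-s}$, so the division algebra component $D$ of $A$ has degree $2^{\,n-s}$. The guiding principle I would use, drawing on the description of $\Phi$ recalled above and on \cite{mn2}, is that in any decomposition $(A,\sigma)\simeq\bigotimes_{i=1}^{n}(Q_i,\sigma_i)$ with units $v_i\in\alt(Q_i,\sigma_i)$ and $\Phi=F[v_1,\dots,v_n]$, a factor $(Q_i,\sigma_i)$ is split exactly when $\disc\sigma_i=v_i^{2}F^{\times2}$ is trivial; in particular a division factor satisfies $v_i^{2}\notin F^{\times2}$, so $v_i\notin F$.

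For the implication $(\Rightarrow)$ I would argue directly. Given a decomposition with $Q_i$ split for $i\le s$ and division for $i>s$, the algebra $D=\bigotimes_{i>s}Q_i$ is the division component (since $\mathfrak{i}(A)=2^{s}$ forces $\mathfrak{i}(\bigotimes_{i>s}Q_i)=1$), and I set $L=F[v_{s+1},\dots,v_n]\subseteq D$. As a finite-dimensional commutative subring of a division algebra, $L$ is a field, and $L\subseteq\Phi$ by construction. To get $[L:F]=2^{\,n-s}$ I would show $v_{s+1}^{2},\dots,v_n^{2}$ are independent in $F^{\times}/F^{\times2}$: a relation $\prod_{i\in S}v_i^{2}=c^{2}$ with $\emptyset\neq S$ would make $w=\prod_{i\in S}v_i$ satisfy $(w-c)^{2}=w^{2}+c^{2}=0$, so the element $w-c$ would be nilpotent in $D$; but each $v_i\notin F$, so the pure tensor $w=\bigotimes_{i\in S}v_i$ is not a scalar, whence $w-c\neq0$ is a nonzero nilpotent in a division algebra, a contradiction. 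Thus $L$ is a subfield of $D$ with $[L:F]=2^{\,n-s}=\deg D=\ind D$, hence a maximal subfield; it splits $D$ and therefore splits $A$.

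The converse $(\Leftarrow)$ is the crux, and I would prove it by induction on $s$, using Lemma~\ref{desort} as the descent engine and Lemma~\ref{dr} for the index bookkeeping. When $s=0$ the subfield $L$ has degree $2^{n}=\ind A$, so $A$ is division and \emph{every} decomposition of the totally decomposable $(A,\sigma)$ has all factors division, i.e.\ has $0$ split factors. For the inductive step, given $L\subseteq\Phi$ with $[L:F]=2^{\,n-s}$ splitting $A$, I would choose generators $w_1,\dots,w_{n-s}$ of $L$ with $w_j^{2}\in F^{\times}$ and extend them to a system of $n$ generators of $\Phi$; using a complementary generator I would isolate a quaternion factor $(Q_1,\sigma_1)$ with $(A,\sigma)\simeq(Q_1,\sigma_1)\otimes(A',\sigma')$, arranging by Lemma~\ref{desort} that $Q_1$ is \emph{split}. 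One then checks that $A'$ is Brauer-equivalent to $A$, so $L$ still splits $A'$ and $L\subseteq\Phi(A',\sigma')$ is a subfield of degree $2^{(n-1)-(s-1)}=2^{\,n-s}$, and that $\mathfrak{i}(A')=2^{\,s-1}$. Applying the induction hypothesis to $(A',\sigma')$ gives a decomposition with $s-1$ split factors, and tensoring back the split $(Q_1,\sigma_1)$ yields the desired decomposition of $(A,\sigma)$ with $s$ split factors.

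The main obstacle is precisely this inductive step: turning the formal subfield $L\subseteq\Phi$ into an honest tensor factorisation in which a genuinely split quaternion is peeled off. The delicate points are (a) selecting the complementary generator of $\Phi$ so that the isolated factor $(Q_1,\sigma_1)$ is actually split rather than merely of trivial discriminant, which is where the descent in Lemma~\ref{desort} is essential; and (b) ensuring that the co-index does not overshoot, i.e.\ that no more than $s$ factors can be split, which is controlled by $\mathfrak{i}(A)=2^{s}$ together with Lemma~\ref{dr} tying the jump of co-index under the purely inseparable extension $L/F$ to the embeddability of $L$ in the division component.
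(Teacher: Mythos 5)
Your forward direction is correct and essentially the paper's: take $L=F[v_{s+1},\dots,v_n]$ inside the division component, and your nilpotency argument for $[L:F]=2^{n-s}$ is a reasonable substitute for citing the structure of $\Phi(A,\sigma)$. (Minor slip in the preamble: a split quaternion factor need not have trivial discriminant, but you only use the correct implication, namely that a division factor has $v_i^2\notin F^{\times2}$.)

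The converse, however, has a genuine gap at exactly the point you flag as delicate. Your induction on $s$ requires, in the inductive step, a decomposition $(A,\sigma)\simeq(Q_1,\sigma_1)\otimes(A',\sigma')$ over $F$ in which $Q_1$ is split; but producing even one split $\sigma$-stable quaternion factor over $F$ is the whole content of the proposition for $s\geqslant 1$, and nothing in your sketch supplies it. In particular, Lemma \ref{desort} cannot do this job: it descends a quaternion algebra defined over a purely inseparable extension $K/F$ to a $\sigma$-stable quaternion $F$-subalgebra (preserving splitness), whereas in your step there is no extension field in play --- you are trying to select a split factor inside an $F$-algebra, and the ``complementary generator'' of $\Phi(A,\sigma)$ only controls the discriminant of $\sigma_1$, not whether $Q_1$ splits. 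The paper's proof avoids this by inducting on $n$ instead of $s$: pick $w\in L\setminus F$, set $K=F[w]$ and $B=C_A(w)$, which is a totally decomposable algebra of degree $2^{n-1}$ over $K$ with $\Phi(B,\sigma|_B)\simeq\Phi(A,\sigma)$; Lemma \ref{dr} gives $\mathfrak{i}(A_K)=2^{s+1}$ and hence $\mathfrak{i}(B)=2^{s}$, so the induction hypothesis yields $s$ split factors of $(B,\sigma|_B)$ \emph{over $K$}. Only then does Lemma \ref{desort} enter, descending each $K$-quaternion factor (split ones to split ones) to $F$, with the last factor recovered as a centralizer. If you want to keep your peeling strategy, you would need an independent construction of a split quaternion factor of $(A,\sigma)$ over $F$ from the hypothesis that $L$ splits $A$; that construction is the missing idea.
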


\begin{proof}
Suppose first that $(A,\sigma)$ has a decomposition $(A,\sigma)\simeq\bigotimes_{i=1}^n(Q_i,\sigma_i)$ such that $Q_i$ splits for $i=1,\cdots s$.
As $\mathfrak{i}(A)=2^s$, the division algebra component of $A$ is isomorphic to $\bigotimes_{i=s+1}^n Q_i$.
Choose a unit $w_i\in\alt(Q_i,\sigma_i)$, $i=1,\cdots,n$.
Then $\Phi(A,\sigma)\simeq F[w_1,\cdots,w_n]$ and one can take $L=F[w_{s+1},\cdots,w_n]$.

Conversely, suppose that there exists a subfield $L\subseteq\Phi(A,\sigma)$ with $[L:F]=2^{n-s}$ which splits $A$.
If $s=n$ (i.e., $A$ splits and $L=F$), the result follows from \cite[(3.6)]{mn}.
Suppose that $s<n$, so $L\neq F$.
We use induction on $n$.
The case $n=1$ is evident, hence let $n\geqslant2$.
Since $A_L$ splits, we have $\mathfrak{i}(A_L)=2^n=[L:F]\cdot\mathfrak{i}(A)$.
By Lemma \ref{dr} there exists an embedding $L\hookrightarrow D$, where $D$ is the division algebra component of $A$.
Choose an element $w\in L\setminus F$ and set $K=F[w]$ and $B=C_A(w)$.
Since $w\in \Phi(A,\sigma)$ and $w^2\notin F^{\times2}$, by \cite[(6.3 (i))]{mn2}, $(B,\sigma|_B)$ is a totally decomposable algebra with orthogonal involution over $K$ and $\Phi(B,\sigma|_B)\simeq\Phi(A,\sigma)$ as $K$-algebras.
Hence, we may identify $L$ with a subfield of $\Phi(B,\sigma|_B)$.
As $K\hookrightarrow D$, using Lemma \ref{dr} we get $\mathfrak{i}(A_K)=2^{s+1}$.
It follows that $\mathfrak{i}(B)=2^s$, because $B\otimes M_2(K)\simeq_K A_K$.
Note that $L$ splits $B$, $[L:K]=2^{n-s-1}$ and $\deg_KB=2^{n-1}$.
Thus, by induction hypothesis, $(B,\sigma|_B)$  has a decomposition with $s$ split factors.
Let
\[\textstyle(B,\sigma|_B)\simeq_K\bigotimes_{i=1}^{n-1}(Q_i,\sigma_i),\]
be such a decomposition into quaternion $K$-algebras with involution for which $Q_i$ splits, $i=1,\cdots,s$.
Choose a unit $v_i\in\alt(Q_i,\sigma_i)$, $i=1,\cdots,n-1$.
Then $\Phi(B,\sigma|_B)\simeq K[v_1,\cdots,v_{n-1}]$.
Since $\Phi(B,\sigma|_B)\simeq\Phi(A,\sigma)$, we have $v_i^{2}\in F^\times$ for every $i$.
By Lemma \ref{desort}, there exists a quaternion algebra with involution $(Q'_i,\sigma'_i)$ over $F$ such that
$(Q_i,\sigma_i)\simeq(Q'_i,\sigma'_i)_K$, $i=1,\cdots,n-1$.
Also, for $i=1,\cdots,s$, $Q'_i$ can be chosen to be split.
The algebra $\bigotimes_{i=1}^{n-1}Q'_i$ may be identified with a subalgebra of $A$.
Set $Q'_n=C_A(\bigotimes_{i=1}^{n-1}Q'_i)$ and $\sigma'_n=\sigma|_{Q'_n}$.
Then $(Q'_n,\sigma'_n)$ is a quaternion $F$-algebra with involution and
\[\textstyle(A,\sigma)\simeq\bigotimes_{i=1}^n(Q'_i,\sigma'_i).\]
Thus, $(A,\sigma)$ has a decomposition with $s$ split factors, proving the result.
\end{proof}

\begin{lem}\label{K2}
Let $K/F$ be a finite field extension with $K^2\subseteq F$ and let $Q$ be a quaternion algebra over $F$.
If $Q_K$ splits, then there exists $x\in Q\setminus F$ such that $x^2\in K^2$.
\end{lem}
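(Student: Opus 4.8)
The plan is to work entirely inside $Q$ with an explicit quaternion basis, reducing the problem to a representation question for the norm form of the quadratic subalgebra generated by $u$. Fix a quaternion basis $(1,u,v,w)$ as in the preliminaries and set $\alpha=u^2+u\in F$ and $\beta=v^2\in F^\times$, so that $w=uv$ and $uv=vu+v$. Write $q_0(a,b)=a^2+ab+\alpha b^2$ for the norm form of the quadratic algebra $F[u]=F[X]/(X^2+X+\alpha)$. A direct computation from the relations $v^2=\beta$, $w^2=\alpha\beta$ and $vw+wv=\beta$ (all of which hold in characteristic two, where $1$, $v$ and $w$ have vanishing reduced trace) shows that any element $x=x_2v+x_3w$ with $x_2,x_3\in F$ satisfies $x^2=\beta\,q_0(x_2,x_3)$. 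Such an $x$ lies outside $F$ as soon as $(x_2,x_3)\neq(0,0)$, since $1,v,w$ are $F$-linearly independent. Hence it suffices to exhibit a nonzero pair $(x_2,x_3)\in F^2$ with $\beta\,q_0(x_2,x_3)\in K^2$.

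The key input is the classical splitting criterion: since $Q_K$ splits, $\beta$ is a norm from the quadratic $K$-algebra $K[u]=K[X]/(X^2+X+\alpha)$, i.e. there exist $z_0,z_1\in K$ with $\beta=z_0^2+z_0z_1+\alpha z_1^2=q_0(z_0,z_1)$. This is exactly where the hypothesis $K^2\subseteq F$ is used: because $\beta,\alpha\in F$ and $z_0^2,z_1^2\in K^2\subseteq F$, the identity forces $z_0z_1=\beta-z_0^2-\alpha z_1^2\in F$. Consequently $z_0$ and $z_1$ are linearly dependent over $F$: if $z_1\neq0$ then $z_0/z_1=z_0z_1/z_1^2\in F$, while if $z_1=0$ they are trivially dependent and $z_0\neq0$ as $\beta\neq0$. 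In either case we may write $(z_0,z_1)=t\,(p,q)$ with $t\in K$ and $(p,q)\in F^2\setminus\{(0,0)\}$.

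It remains to assemble the element. Since $q_0$ is quadratic, $\beta=q_0(z_0,z_1)=t^2q_0(p,q)$. Taking $x=pv+qw$, the computation of the first paragraph gives
\[
x^2=\beta\,q_0(p,q)=t^2q_0(p,q)^2=\bigl(t\,q_0(p,q)\bigr)^2 .
\]
As $t\in K$ and $q_0(p,q)\in F\subseteq K$, this lies in $K^2$, and $x\notin F$ because $(p,q)\neq(0,0)$. This yields the desired element and proves the lemma.

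The step I expect to be most delicate is the transition from the splitting hypothesis on $Q_K$ to the arithmetic statement $\beta=q_0(z_0,z_1)$ over $K$, together with the observation that $z_0z_1\in F$. The latter is the crux: it is precisely what converts the purely inseparable condition $K^2\subseteq F$ into $F$-proportionality of $z_0$ and $z_1$, and hence produces a representing pair $(p,q)$ defined over $F$. By contrast, the quaternion-relation identities ($w^2=\alpha\beta$ and $vw+wv=\beta$) and the final substitution are routine, though they must be carried out carefully in characteristic two.
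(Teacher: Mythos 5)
Your proof is correct and follows essentially the same route as the paper's: both invoke the norm splitting criterion to write $\beta=v^2$ as a value of the norm form $q_0$ of $F[u]$ with coefficients in $K$, and then produce $x$ in the $F$-span of $v$ and $w$ whose square is $\beta\,q_0$ evaluated at those coefficients. The only difference is cosmetic --- the paper sets $e=cd^{-1}$ and treats the case $d=0$ separately, whereas your normalization $(z_0,z_1)=t(p,q)$ handles both cases at once and makes explicit (via $z_0z_1\in F$) why the coefficients of $x$ land in $F$, a point the paper leaves implicit.
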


\begin{proof}
Let $(1,u,v,w)$ be a quaternion basis of $Q$.
Set $a=u^2+u\in F$ and $b=v^2\in F^\times$.
Since $Q_K$ splits, by \cite[(98.14) (5)]{elman}, $b$ is a norm in the quadratic \'etale extension $K_a:=K[X]/(X^2+X+a)$ of $K$, i.e.,
$b=c^2+cd+d^2a$  for some $c,d\in K$.
If $d=0$, then $b=c^2\in K^2$ and we are done.
Suppose that $d\neq 0$.
Set $e:=cd^{-1}\in K$ and $x=ev+w\in Q\setminus F$.
Then $b=d^2(e^2+e+a)$ and
\[x^2=e^2b+eb+ab=b(e^2+e+a)=d^2(e^2+e+a)^2\in K^2.\qedhere\]
\end{proof}

\begin{cor}\label{bi}
Let $(A,\sigma)$ be a totally decomposable algebra of degree $2^n$ with ortho\-gonal involution over $F$.
If $\mathfrak{i}(A)=2^{n-1}$, then $(A,\sigma)$ has a decomposition with $n-1$ split factors.
\end{cor}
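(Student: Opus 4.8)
The plan is to reduce the statement to Proposition \ref{main}. Since $\deg_F A = 2^n$ and $\mathfrak{i}(A) = 2^{n-1}$, we have $\ind A = 2$, so the division algebra component $D$ of $A$ is a quaternion algebra; by Proposition \ref{main} with $s = n-1$, proving that $(A,\sigma)$ has a decomposition with $n-1$ split factors amounts to producing a subfield $L$ of $\Phi(A,\sigma)$ with $F\subseteq L$ and $[L:F]=2$ that splits $A$, equivalently that splits $D$.

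First I would fix a decomposition $(A,\sigma)\simeq\bigotimes_{i=1}^n(Q_i,\sigma_i)$ and, for each $i$, a unit $v_i\in\alt(Q_i,\sigma_i)$, so that $\Phi(A,\sigma)\simeq F[v_1,\dots,v_n]$ with $b_i:=v_i^2\in F^\times$. The next step is to locate inside $\Phi(A,\sigma)$ a subfield $K$ with $K^2\subseteq F$ that already splits $A$. For this I would choose among $b_1,\dots,b_n$ a subfamily $b_{i_1},\dots,b_{i_r}$ whose classes form a basis of the subgroup they generate in $F^\times/F^{\times2}$, and set $K=F[v_{i_1},\dots,v_{i_r}]$. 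Because these classes are independent modulo squares, $K$ is a purely inseparable field extension of $F$ of degree $2^r$ with $K^2\subseteq F$. Over $K$ each $b_i$ becomes a square, hence each $(Q_i)_K$ splits, so $A_K$ and therefore $D_K$ split.

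The heart of the argument is to descend from $K$ to a quadratic subfield, and this is where Lemma \ref{K2} enters. Applying Lemma \ref{K2} to the quaternion algebra $D$ and the extension $K/F$ (noting that $D_K$ splits and $K^2\subseteq F$) yields an element $x\in D\setminus F$ with $x^2\in K^2$. Since $D$ is a division algebra it has no nonzero nilpotents, so $x^2\notin F^{\times2}$ (otherwise $x\in F$); thus $F[x]$ is a quadratic subfield of $D$ and, being a maximal subfield of the quaternion division algebra $D$, it splits $D$. Writing $x^2=\gamma^2$ with $\gamma\in K\subseteq\Phi(A,\sigma)$, the element $\gamma$ lies outside $F$ and generates a quadratic subfield $L:=F[\gamma]$ of $\Phi(A,\sigma)$ that is isomorphic to $F[x]$; hence $L$ splits $D$ as well, so $L$ splits $A$. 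Proposition \ref{main} then delivers the desired decomposition with $n-1$ split factors.

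I expect the main obstacle to be exactly this transfer step: Lemma \ref{K2} naturally produces a splitting element inside $D$, whereas Proposition \ref{main} requires a quadratic subfield inside $\Phi(A,\sigma)$. The crucial feature making the passage work is that the square $x^2$ furnished by Lemma \ref{K2} lies in $K^2$, which lets me replace $x\in D$ by $\gamma\in K\subseteq\Phi(A,\sigma)$ generating an isomorphic, and therefore equally splitting, quadratic extension of $F$. A secondary point to check with care is that the chosen generators really do cut out a genuine subfield of $\Phi(A,\sigma)$ that splits $A$.
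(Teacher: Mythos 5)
Your proposal is correct and follows essentially the same route as the paper: produce a purely inseparable subfield $K$ of $\Phi(A,\sigma)$ with $K^2\subseteq F$ that splits $A$, apply Lemma \ref{K2} to the quaternion division component $D$, use the fact that $x^2\in K^2$ to replace $x\in D$ by an element of $K$ generating an isomorphic quadratic splitting subfield inside $\Phi(A,\sigma)$, and conclude by Proposition \ref{main}. The only divergence is that the paper takes $K$ to be a maximal subfield of $\Phi(A,\sigma)$ and cites an external result for the fact that it splits $A$, whereas you construct $K$ explicitly from the elements $v_i$ and verify the splitting directly; both versions are sound.
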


\begin{proof}
Let $L$ be a maximal subfield of $\Phi(A,\sigma)$ containing $F$.
Since $\mathfrak{i}(A)=2^{n-1}$, the division algebra component $D$ of $A$ is a quaternion algebra.
By \cite[(5.9)]{mn2}, $L$ is a splitting field of $A$, hence $D_L$ splits.
By Lemma \ref{K2} there exists $v\in D\setminus F$ such that $v^2\in L^2$.
It follows that $L\subseteq\Phi(A,\sigma)$ has a subfield isomorphic to $F[v]\subseteq D$ which splits $D$ (and therefore $A$).
Since $L^2\subseteq F$ we have $[F[v]:F]=2$.
Hence the result follows from Proposition \ref{main}.
\end{proof}

\section{Orthogonal Pfister involutins}
Let $(A,\sigma)\simeq\bigotimes_{i=1}^n(Q_i,\sigma_i)$ be a totally decomposable algebra with ortho\-gonal involution over $F$ and let $\alpha_i\in F^\times$ be a representative of the class $\disc\sigma_i\in F^\times/F^{\times2}$, $i=1,\cdots,n$.
By \cite[(7.3)]{dolphin}, the bilinear Pfister form $\lla\alpha_1,\cdots,\alpha_n\rra$ is independent of the decomposition of $(A,\sigma)$.
As in \cite{dolphin}, we denote this form by $\mathfrak{Pf}(A,\sigma)$.

The next result follows from \cite[(4.6 (iii)), (5.5) and (5.6)]{mn2}.

\begin{lem}\label{lem}
Let $(A,\sigma)$ be a totally decomposable algebra with orthogonal invo\-lution over $F$.
If $\mathfrak{Pf}(A,\sigma)\simeq\lla\alpha_1,\cdots,\alpha_n\rra$ for some $\alpha_1,\cdots,\alpha_n\in F^\times$, then there exist $v_1,\cdots,v_n\in\alt(A,\sigma)$ such that $\Phi(A,\sigma)= F[v_1,\cdots,v_n]$ and $v_i^2=\alpha_i$ for $i=1,\cdots,n$.
\end{lem}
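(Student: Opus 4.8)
The plan is to move everything into the commutative subalgebra $\Phi(A,\sigma)$ and its squaring map, and to reduce the statement to one about presentations of a bilinear Pfister form.

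First I would fix a decomposition $(A,\sigma)\simeq\bigotimes_{i=1}^n(Q_i,\sigma_i)$ and units $v_i\in\alt(Q_i,\sigma_i)$, and set $\Phi:=\Phi(A,\sigma)=F[v_1,\dots,v_n]$ and $\gamma_i:=v_i^2\in F^{\times}$. Since every $v_i$ has reduced trace zero, $\disc\sigma_i=\nrd(v_i)F^{\times2}=\gamma_iF^{\times2}$, so by the definition of $\mathfrak{Pf}(A,\sigma)$ and \cite[(7.3)]{dolphin} we have $\mathfrak{Pf}(A,\sigma)\simeq\lla\gamma_1,\dots,\gamma_n\rra$. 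The hypothesis then reads $\lla\gamma_1,\dots,\gamma_n\rra\simeq\lla\alpha_1,\dots,\alpha_n\rra$, and the goal becomes internal to $\Phi$: modify the generators so that their squares are exactly $\alpha_1,\dots,\alpha_n$.

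The central device is the squaring map $q\colon\Phi\to F$, $q(x)=x^2$. As $\Phi$ is commutative of characteristic two, $q$ is additive and satisfies $q(cx)=c^2q(x)$; in the monomial basis $v^I=\prod_iv_i^{I_i}$ it is the quasilinear quadratic form $\langle\gamma^I\rangle$ underlying $\lla\gamma_1,\dots,\gamma_n\rra$, and its set of represented values is the $F^2$-subspace $\Phi^2=\sum_IF^2\gamma^I$. An isometry of bilinear forms induces an isometry of the associated quasilinear forms, hence an equality of value sets; thus $\sum_IF^2\gamma^I=\sum_IF^2\alpha^I$, and in particular each $\alpha_i$ lies in $\Phi^2$. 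So there are units $w_i\in\Phi$ with $w_i^2=\alpha_i$, and it only remains to arrange that they generate $\Phi$.

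This last point is the main obstacle. When $\mathfrak{Pf}(A,\sigma)$ is anisotropic, $\Phi$ is a field, anisotropy of $\lla\alpha_1,\dots,\alpha_n\rra$ forces the $\alpha_i$ to be $2$-independent, and the identity $[F(w_1,\dots,w_n):F]=[F^2(\alpha_1,\dots,\alpha_n):F^2]=2^n=[\Phi:F]$ yields $F[w_1,\dots,w_n]=\Phi$ for free. The difficulty is the isotropic (metabolic) case, where $\Phi$ is non-reduced: the $\gamma^I$ become $F^2$-dependent, each square root $w_i$ is pinned down only up to a square-zero nilpotent, and an arbitrary choice need not generate. Here I would instead connect the two presentations by a chain of elementary $p$-equivalences of slot sequences and lift each move to an elementary change of generators of $\Phi$: a permutation of the $v_i$, a rescaling $v_i\mapsto cv_i$ (which multiplies $\gamma_i$ by $c^2$), or the substitution $v_j\mapsto v_iv_j$ (which sends $(\gamma_i,\gamma_j)\mapsto(\gamma_i,\gamma_i\gamma_j)$). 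Because $\Phi$ is closed under multiplication, self-centralizing, contained in $F+\alt(A,\sigma)$, and consists of elements squaring into $F$, every such transform again lies in $\Phi$ and squares into $F$; the only thing to check at each step is that the new $n$-tuple still generates, which for the listed moves is immediate since each is invertible over $F[v_1,\dots,v_n]$. Composing the lifted moves carries $v_1,\dots,v_n$ to generators $w_1,\dots,w_n$ of $\Phi$ with $w_i^2=\alpha_i$. The technical heart is therefore verifying a chain $p$-equivalence theorem for bilinear Pfister forms strong enough to cover the isotropic case and confirming that its elementary moves are exactly the ones realizable inside $\Phi$.
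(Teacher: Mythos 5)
The paper does not actually prove this lemma internally: it is quoted wholesale from \cite[(4.6 (iii)), (5.5) and (5.6)]{mn2}, so there is no detailed argument of the author's to compare against. Your framework is nevertheless the right one --- the squaring map on $\Phi(A,\sigma)$ \emph{is} the quasilinear quadratic form attached to $\mathfrak{Pf}(A,\sigma)$, and the lemma is exactly a statement about re-presenting the generators --- and your anisotropic case is essentially correct, modulo one elided point: an arbitrary square root $w_i\in\Phi(A,\sigma)$ of $\alpha_i$ lies a priori only in $F+\alt(A,\sigma)$, not in $\alt(A,\sigma)$ (e.g.\ $(1+v_1)^2=1+\gamma_1$, and $1\notin\alt(A,\sigma)$ since $\sigma$ is orthogonal). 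To place $w_i$ in $\alt(A,\sigma)$ you need $\alpha_i$ to be represented by the \emph{pure part} of $\mathfrak{Pf}(A,\sigma)$, which in the anisotropic case follows from the pure subform theorem but is an extra step you do not take.

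The genuine gap is the one you flag yourself and then leave open: the isotropic case. The lemma carries no anisotropy hypothesis, and it cannot be discarded as degenerate --- in the paper it is invoked in Proposition \ref{prop} for $(A,\sigma)_K$ \emph{before} anisotropy of $\sigma_K$ has been established. The chain $p$-equivalence theorem for bilinear Pfister forms in characteristic two (cf.\ \cite[\S6]{elman}) is proved under an anisotropy hypothesis, and its proof genuinely uses it: for isotropic symmetric bilinear forms in characteristic two Witt cancellation fails, the pure part is not well defined, and metabolic bilinear Pfister forms of the same fold need not be isometric, so no off-the-shelf chain equivalence statement is available. Even granting such a statement, your list of elementary moves (permutation, $v_i\mapsto cv_i$, $v_j\mapsto v_iv_j$) does not realize an arbitrary simple $p$-equivalence $\lla\gamma_i,\gamma_j\rra\simeq\lla\beta_i,\beta_j\rra$; at minimum you need moves of the shape $v_i\mapsto xv_i+yv_iv_j$ (which do stay in $\Phi(A,\sigma)\cap\alt(A,\sigma)$ and square to $\gamma_i(x^2+\gamma_jy^2)$), and you would have to prove that the resulting moves suffice. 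Closing this is precisely the content of \cite[(5.5), (5.6)]{mn2}, which the paper cites instead of reproving; as written, your argument establishes the lemma only when $\mathfrak{Pf}(A,\sigma)$ is anisotropic.
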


\begin{prop}\label{prop}
Let $(A,\sigma)$ be a central simple algebra of degree $2^n$ with anisotropic orthogonal involution over $F$ and let $K/F$ be a separable quadratic extension.
Then $(A,\sigma)$ is totally decomposable if and only if $(A,\sigma)_K$ is totally decomposable.
\end{prop}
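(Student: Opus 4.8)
The forward implication is immediate: if $(A,\sigma)\simeq\bigotimes_{i=1}^n(Q_i,\sigma_i)$ over $F$, then $(A,\sigma)_K\simeq\bigotimes_{i=1}^n(Q_i,\sigma_i)_K$ is again a tensor product of quaternion $K$-algebras with involution, hence totally decomposable. So the content lies in the converse, which I would establish by Galois descent. Write $K=F[\theta]$ with $\theta^2+\theta=a$ for some $a\in F$, let $\rho$ be the nontrivial element of $\gal(K/F)$ (so $\rho(\theta)=\theta+1$), and extend it to the semilinear $F$-automorphism $\bar\rho=\id\otimes\rho$ of $A_K=A\otimes_FK$. Then $\bar\rho$ has order two, commutes with $\sigma_K=\sigma\otimes\id$, preserves both $K$ and $\alt(A_K,\sigma_K)=\alt(A,\sigma)\otimes_FK$, and satisfies $(A_K)^{\bar\rho}=A$. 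Assuming $(A,\sigma)_K$ totally decomposable, \cite{mn2} furnishes a subalgebra $\Phi_K\subseteq K+\alt(A_K,\sigma_K)$ of dimension $2^n$ with the properties (i)--(iii). The plan is to arrange that $\Phi_K$ is $\bar\rho$-stable and then to descend it.

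Granting $\bar\rho(\Phi_K)=\Phi_K$, the descent is bookkeeping. Since $\bar\rho$ is a semilinear involution of the $K$-space $\Phi_K$, the fixed set $\Phi:=\Phi_K\cap A$ is an $F$-subalgebra with $\dim_F\Phi=2^n$ and $\Phi\otimes_FK=\Phi_K$. It inherits the three properties over $F$: for $x\in\Phi\subseteq A$ one has $x^2\in K\cap A=F$, giving (iii); from $K+\alt(A_K,\sigma_K)=\bigl(F+\alt(A,\sigma)\bigr)\oplus\theta\bigl(F+\alt(A,\sigma)\bigr)$ one gets, upon intersecting with $A$, that $\Phi\subseteq F+\alt(A,\sigma)$; and $C_A(\Phi)=C_{A_K}(\Phi_K)\cap A=\Phi_K\cap A=\Phi$ yields (i), whence also the generation by $n$ elements of (ii). By the correspondence of \cite{mn2} between such subalgebras and decompositions into quaternion $F$-algebras with involution, this shows $(A,\sigma)$ totally decomposable; concretely, writing $\Phi=F[v_1,\dots,v_n]$ with $v_i\in\alt(A,\sigma)$ and $v_i^2\in F^\times$ as in Lemma \ref{lem}, one reads off an honest $F$-decomposition.

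The crux is the $\bar\rho$-stabilization of $\Phi_K$, and this is exactly where anisotropy of $\sigma$ must enter. As the subalgebra of \cite{mn2} is unique only up to isomorphism, $\bar\rho(\Phi_K)$ is a priori merely a conjugate, $\bar\rho(\Phi_K)=u\,\Phi_K\,u^{-1}$; the relation $\bar\rho^2=\id$ turns $u$ into a $1$-cocycle for $\bar\rho$ acting on the automorphism group of $(A_K,\sigma_K)$, and forcing $\bar\rho(\Phi_K)=\Phi_K$ after conjugation amounts to splitting this cocycle. I expect anisotropy of $\sigma$ to be precisely what makes the associated norm-one datum a coboundary, so that a suitable conjugate of $\Phi_K$ is genuinely $\bar\rho$-stable; this cocycle-splitting is the main obstacle. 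A concrete alternative, staying closer to the tools already developed, is to induct on $n$ after a case analysis on the index: by Lemma \ref{dr} either $A_K$ retains the index of $A$, or $K$ embeds into the division component of $A$, and in the latter case one can peel off a $\sigma$-stable quaternion $F$-subalgebra and pass to its centralizer as in Proposition \ref{main}; everything else then reduces to Galois descent of vector spaces together with the properties (i)--(iii).
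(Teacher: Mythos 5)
Your forward direction and the final descent bookkeeping (granting a $\bar\rho$-stable $\Phi_K$) are fine, but there is a genuine gap exactly where you yourself locate it: you never establish that $\Phi((A,\sigma)_K)$ can be chosen stable under the semilinear extension of the Galois involution, and that step is the entire content of the proposition. The cocycle formalism does not obviously even get off the ground: the subalgebra of \cite{mn2} is unique only up to abstract $K$-algebra isomorphism, and since it is commutative rather than simple, Skolem--Noether gives no reason that $\bar\rho(\Phi_K)$ and $\Phi_K$ should be conjugate by a unit of $A_K$, let alone by one compatible with $\sigma_K$; and even granting a well-defined $1$-cocycle, you give no argument that it splits --- the assertion that anisotropy of $\sigma$ makes the norm-one datum a coboundary is stated as an expectation, not proved. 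Your fallback sketch (induction on $n$ with a case split on the index via Lemma \ref{dr}) is likewise only a plan.

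The paper closes this gap by a different mechanism, which you may want to compare with. It first descends the invariant $\mathfrak{Pf}((A,\sigma)_K)$ to a Pfister form $\lla\alpha_1,\dots,\alpha_n\rra$ with $\alpha_i\in F^\times$, using \cite[(6.7)]{me} and \cite[(3.3)]{dolphin}; by Lemma \ref{lem} this produces generators $v_i$ of $\Phi((A,\sigma)_K)$ lying in $\alt((A,\sigma)_K)$ with $v_i^2=\alpha_i\in F^\times$. Writing $v_i=u_i\otimes1+w_i\otimes\eta$ and expanding $v_i^2$ forces $w_i^2=u_iw_i-\sigma(u_iw_i)\in\alt(A,\sigma)$. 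The decisive input is then that an anisotropic totally decomposable orthogonal involution in characteristic two is \emph{direct} (\cite[(6.1)]{dolphin}, after ruling out isotropy of $\sigma_K$ via \cite[(6.2)]{dolphin} and \cite[(5.6)]{dol}); applied to $\sigma_K(w_i\otimes1)(w_i\otimes1)=w_i^2\otimes1\in\alt(A,\sigma)_K$, directness kills $w_i$. Thus for this choice of generators $\Phi((A,\sigma)_K)$ is automatically defined over $F$ and no cocycle ever needs to be split. To repair your argument you would need to supply the stabilization step; the directness property is precisely the tool the paper substitutes for it.
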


\begin{proof}
Suppose that $(A,\sigma)_K$ is totally decomposable.
By \cite[(6.7)]{me}, there exists a symmetric bilinear form $\mathfrak{b}$ over $F$ such that $\mathfrak{Pf}((A,\sigma)_K)\simeq\mathfrak{b}_K$.
Since $\mathfrak{b}_K$ is a Pfister form, the form $\mathfrak{b}$ is similar to a Pfister form $\mathfrak{c}$ over $F$ by \cite[(3.3)]{dolphin}.
Hence, $\mathfrak{c}_K$ is similar to $\mathfrak{Pf}(A,\sigma)$.
As $\mathfrak{c}_K$ represents $1$, we get $\mathfrak{c}_K\simeq\mathfrak{Pf}(A,\sigma)$ (see \cite[(2.4)]{dolphin}).
Write $\mathfrak{c}=\lla\alpha_1,\cdots,\alpha_n\rra$ for some $\alpha_1,\cdots,\alpha_n\in F^\times$.
Then $\mathfrak{Pf}(A,\sigma)\simeq\lla\alpha_1,\cdots,\alpha_n\rra_K$.
By Lemma \ref{lem} one can write $\Phi((A,\sigma)_K)=K[v_1,\cdots,v_n]$, where $v_i\in\alt((A,\sigma)_K)$ and $v_i^2=\alpha_i$, $i=1,\cdots,n$.
Choose $\eta\in K$ with $\delta:=\eta^2+\eta\in F$ for which $K=F(\eta)$.
Since $\alt((A,\sigma)_K)=\alt(A,\sigma)\otimes_FK$, every $v_i$ can be written as $v_i=u_i\otimes1+w_i\otimes\eta$, where $u_i,w_i\in\alt(A,\sigma)$.
Hence
\[v_i^2=(u_i^2+\delta w_i^2)\otimes1+(u_iw_i+w_iu_i+w_i^2)\otimes\eta.\]
As $v_i^2\in F^\times$, we have $u_iw_i+w_iu_i+w_i^2=0$.
It follows that
\begin{align}\label{eqnew}
  w_i^2=u_iw_i-\sigma(u_iw_i)\in\alt(A,\sigma).
\end{align}
If $\sigma_K$ is isotropic, then it is metabolic by \cite[(6.2)]{dolphin}, which contradicts \cite[(5.6)]{dol}.
Thus, $\sigma_K$ is anisotropic.
Since $(A,\sigma)_K$ is totally decomposable, \cite[(6.1)]{dolphin} implies that $\sigma_K$ is direct.
By (\ref{eqnew}) we have
\[\sigma_K(w_i\otimes1)\cdot(w_i\otimes1)=(w_i^2\otimes1)\in\alt(A,\sigma)_K.\]
Hence, $w_i=0$, i.e., $v_i=u_i\otimes1$ for $i=1,\cdots,n$.
Set $S=F[u_1,\cdots,u_n]\subseteq A$, so that $\Phi((A,\sigma)_K)=S\otimes_FK$.
Note that $S\subseteq F+\alt(A,\sigma)$ is an $F$-algebra of dimension $2^n$, $C_A(S)=S$ and $x^2\in F$ for every $x\in S$.
Hence, $(A,\sigma)$ is totally decomposable by \cite[(4.6)]{mn2}, proving the `if' implication.
The converse is trivial.
\end{proof}

The next result gives a solution to Conjecture \ref{con} for the case where $A$ is Brauer-equivalent to a quaternion algebra.
\begin{cor}{\rm (Compare \cite[Theorem 2]{becher})}\label{main2}
Let $(A,\sigma)$ be a central simple $F$-algebra of degree $2^n$ with anisotropic orthogonal involution.
If $A$ is Brauer-equivalent to a quaternion algebra $Q$ over $F$, then the following statements are equivalent.
\begin{itemize}
  \item [(1)] $(A,\sigma)$ is totally decomposable.
  \item [(2)] If $K$ is a splitting field of $A$, then $(A,\sigma)_K$ is adjoint to bilinear Pfister form.
  \item [(3)]$(A,\sigma)\simeq(Q,\tau)\otimes\ad(\mathfrak{b})$ for some orthogonal involution $\tau$ on $Q$ and some $(n-1)$-fold bilinear Pfister form $\mathfrak{b}$ over $F$.
  \item [(4)] For every field extension $L/F$, $(A,\sigma)_L$ is either anisotropic or metabolic.
\end{itemize}
\end{cor}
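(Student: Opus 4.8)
The plan is to establish the equivalences by a cycle, exploiting the earlier results to reduce everything to the split-factor analysis of Section 3. I would organize the logic as $(1)\Rightarrow(2)\Rightarrow(3)\Rightarrow(1)$ together with $(3)\Leftrightarrow(4)$, since $(3)$ is the concrete structural statement that most readily connects to both the Pfister-form condition $(2)$ and the isotropy/metabolicity condition $(4)$. The implications $(1)\Rightarrow(2)$ and $(1)\Rightarrow(4)$ are free: the first is \cite[(3.6)]{mn} (phrased earlier as the Pfister Factor Conjecture in characteristic two) and the second is \cite[(6.2)]{dolphin}, quoted in the introduction. So the real content is extracting the decomposition $(3)$ from the splitting-field hypothesis $(2)$, and then verifying that $(3)$ feeds back into $(1)$ and into $(4)$.

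For $(2)\Rightarrow(3)$, the key point is that $A$ has Schur index $2$ (it is Brauer-equivalent to the quaternion algebra $Q$), so its co-index is $\mathfrak{i}(A)=2^{n-1}$. First I would show $(A,\sigma)$ is totally decomposable: hypothesis $(2)$ says $(A,\sigma)_K$ is adjoint to a bilinear Pfister form for every splitting field $K$, and I would pass to a separable quadratic splitting field of $A$ — such a field exists inside $Q$ since $Q$ has a separable quadratic subfield — over which $(A,\sigma)_K$, being adjoint to a Pfister form, is in particular totally decomposable; then Proposition \ref{prop} descends total decomposability back to $F$ (here the anisotropy of $\sigma$ is exactly the hypothesis of \ref{prop}). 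Once $(A,\sigma)$ is totally decomposable with $\mathfrak{i}(A)=2^{n-1}$, Corollary \ref{bi} gives a decomposition with $n-1$ split factors, i.e.
\[\textstyle(A,\sigma)\simeq\bigl(\bigotimes_{i=1}^{n-1}(Q_i,\sigma_i)\bigr)\otimes(Q_n,\sigma_n)\]
with $Q_1,\dots,Q_{n-1}$ split and $Q_n$ a quaternion division algebra. Grouping the $n-1$ split quaternion factors into $\ad(\mathfrak{b})$ for an $(n-1)$-fold bilinear Pfister form $\mathfrak{b}$ (via the Pfister Factor Conjecture \cite[(3.6)]{mn}, which identifies a split totally decomposable involution with the adjoint of a Pfister form) and setting $(Q,\tau)=(Q_n,\sigma_n)$ — noting $Q_n$ is Brauer-equivalent to $Q$ so $Q_n\simeq Q$ — yields precisely the form demanded in $(3)$.

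The implication $(3)\Rightarrow(1)$ is immediate, since $(Q,\tau)\otimes\ad(\mathfrak{b})$ is manifestly a tensor product of quaternion algebras with involution. For the equivalence $(3)\Leftrightarrow(4)$, I would invoke the known equivalence of the Pfister-form condition and the anisotropic-or-metabolic condition over all extensions; concretely $(3)\Rightarrow(4)$ follows because a decomposition of the shape in $(3)$ is totally decomposable, whence $(1)\Rightarrow(4)$ applies, while $(4)\Rightarrow(2)$ holds because over a splitting field $K$ the metabolic-or-anisotropic behaviour forces $(A,\sigma)_K$ — now split and adjoint to a symmetric bilinear form that is anisotropic or metabolic over every extension — to be adjoint to a Pfister form by \cite[(5.5)]{lagh}, the characteristic-two bilinear analogue quoted in the introduction. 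I expect the main obstacle to be the descent step $(2)\Rightarrow(1)$: one must produce a \emph{separable} quadratic splitting field in order to apply Proposition \ref{prop}, and must handle the possibility that $\sigma_K$ becomes isotropic. The argument inside Proposition \ref{prop} already resolves the latter (isotropy of $\sigma_K$ would force metabolicity by \cite[(6.2)]{dolphin}, contradicting anisotropy of the direct involution via \cite[(5.6)]{dol}), so the delicate part is really ensuring the hypotheses of \ref{prop} and \ref{bi} are met simultaneously — that is, confirming the co-index is exactly $2^{n-1}$ and that the quadratic splitting field can be taken separable, both of which hold because $A\sim Q$ with $Q$ a quaternion algebra.
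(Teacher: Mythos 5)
Your overall route coincides with the paper's at the decisive points: both arguments pass to a separable quadratic splitting field $K\subseteq Q$, show that $(A,\sigma)_K$ is totally decomposable there, descend to $F$ by Proposition \ref{prop}, and extract the explicit decomposition in (3) from Corollary \ref{bi}. The one place where your proposal has a genuine gap is the implication $(4)\Rightarrow(2)$. You justify it by applying \cite[(5.5)]{lagh} to the bilinear form $\mathfrak{b}$ with $(A,\sigma)_K\simeq\ad(\mathfrak{b})$ over an \emph{arbitrary} splitting field $K$. But that result applies only to \emph{anisotropic} forms, and under hypothesis (4) alone $(A,\sigma)_K$ may become isotropic, hence metabolic, over a given splitting field: anisotropy of $\sigma$ over $F$ is not preserved by arbitrary scalar extension. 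In that case $\mathfrak{b}$ is a metabolic form of dimension $2^n$, and, as the introduction of the paper notes using \cite[(5.5)]{dolphin}, such a form need not be similar to a Pfister form; so \cite[(5.5)]{lagh} gives nothing and your stated justification of $(4)\Rightarrow(2)$ breaks down. The paper avoids this by citing \cite[(8.3)]{dolphin}, which \emph{is} the implication $(4)\Rightarrow(2)$, and uses it only to conclude that $(A,\sigma)_K$ is totally decomposable over the chosen separable quadratic $K$ before descending.

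The repair stays entirely within your framework. For the \emph{separable quadratic} splitting field $K\subseteq Q$, hypothesis (4) together with the anisotropy of $\sigma$ forces $(A,\sigma)_K$ to be anisotropic: were it metabolic, $(A,\sigma)$ itself would be isotropic (this is the descent of isotropy along separable quadratic extensions used in the proof of Theorem \ref{main3} via \cite[(5.6)]{dolphin}, and in Proposition \ref{prop} via \cite[(5.6)]{dol}). For this one $K$ your appeal to \cite[(5.5)]{lagh} is then legitimate, $(A,\sigma)_K$ is adjoint to a Pfister form and hence totally decomposable, and Proposition \ref{prop} yields (1); the full statement (2), over \emph{all} splitting fields, should then be recovered from $(1)\Rightarrow(2)$ rather than proved directly from (4). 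With that rerouting, or simply with the citation of \cite[(8.3)]{dolphin} in place of \cite[(5.5)]{lagh}, your argument agrees with the paper's.
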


\begin{proof}
The implication $(3)\Rightarrow (1)$ is evident.
The implications $(1)\Rightarrow (2)$, $(1)\Rightarrow (3)$ and
$(1)\Rightarrow (4)$ follow from \cite[(3.6)]{mn}, Corollary \ref{bi} and \cite[(6.2)]{dolphin} respectively (even without the anisotropy condition on $\sigma$).
Let $K\subseteq Q$ be a separable quadratic extension of $F$.
Then $K$ is a splitting field of $A$.
In view of Proposition \ref{prop}, to prove $(2)\Rightarrow (1)$ and $(4)\Rightarrow (1)$ it is enough to show that $(A,\sigma)_K$ is totally decomposable.
This is clear if the condition $(2)$ is satisfied.
Otherwise, $(A,\sigma)_K$ is totally decomposable by \cite[(8.3)]{dolphin}.
\end{proof}
The following result is the analogue of Merkurjev's theorem in characteristic two, which was proved by O. Teichm\"uller (see \cite[(9.1.4)]{gille}).
\begin{thm}\label{mer}
Every central simple $F$-algebra of exponent $2$ is Brauer-equivalent to a tensor product of quaternion algebras.
\end{thm}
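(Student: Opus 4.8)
The plan is to reduce the statement to the assertion that the $2$-torsion subgroup ${}_2\mathrm{Br}(F)$ is generated, as an abelian group, by the classes of quaternion algebras, and then to prove that generation statement by a splitting-field d\'evissage. The reduction is immediate: if $[A]\in{}_2\mathrm{Br}(F)$ and we can write $[A]=[Q_1]+\cdots+[Q_r]$ in $\mathrm{Br}(F)$ with each $Q_i$ a quaternion algebra, then, since each $[Q_i]$ has order dividing $2$ and the group law of $\mathrm{Br}(F)$ is induced by $\otimes_F$, we get $[A]=[Q_1\otimes_F\cdots\otimes_F Q_r]$, so $A$ is Brauer-equivalent to $Q_1\otimes_F\cdots\otimes_F Q_r$. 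Recall the characteristic-$2$ symbol: for $a\in F$ and $b\in F^\times$ let $[a,b)$ be the quaternion algebra with quaternion basis $(1,u,v,w)$ satisfying $u^2+u=a$, $v^2=b$ and $w=uv=vu+v$; the symbol is bilinear modulo the expected relations, and a class lying in $\mathrm{Br}(K/F)$ for a separable quadratic $K=F(u)$ with $u^2+u=a$ is, via the cyclic-algebra isomorphism $\mathrm{Br}(K/F)\simeq F^\times/\nr_{K/F}(K^\times)$, exactly a single symbol $[a,b)$.

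The argument begins from two standard facts: the index of a class of exponent $2$ is a power of $2$ (index and exponent share their prime divisors), and a central simple algebra of $2$-power index admits a \emph{separable} splitting field of degree equal to its index. Thus a given $[A]\in{}_2\mathrm{Br}(F)$ is split by a finite separable extension $E/F$ of $2$-power degree. Passing to the Galois closure $M/F$ of $E$ and choosing a $2$-Sylow subgroup $P\leqslant\gal(M/F)$ with fixed field $F_1=M^P$, the extension $M/F_1$ is Galois with $2$-group Galois group $P$ and still splits $A$; moreover $[F_1:F]$ is odd, so from $\co_{F_1/F}\circ\mathrm{res}_{F_1/F}=[F_1:F]\cdot\mathrm{id}$ and the fact that $[A]$ is $2$-torsion we recover $[A]=\co_{F_1/F}\bigl(\mathrm{res}_{F_1/F}[A]\bigr)$. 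Hence it suffices to show that $\mathrm{res}_{F_1/F}[A]$ is a sum of symbols over $F_1$ and that corestriction carries sums of symbols to sums of symbols.

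The generation over $F_1$ is obtained by d\'evissage along the nilpotent group $P$: induct on $|P|$, the case $|P|=2$ being the cyclic computation recalled above. For $|P|=2^k$ with $k\geqslant2$, choose a subgroup of index $2$, producing a separable quadratic $F_1\subseteq L\subseteq M$ with $\gal(M/L)$ a smaller $2$-group; by the inductive hypothesis $\mathrm{res}_{L/F_1}[A]$ is a sum of symbols over $L$, and one uses the inflation--restriction sequence $0\to\mathrm{Br}(L/F_1)\to\mathrm{Br}(M/F_1)\to\mathrm{Br}(M/L)$ together with corestriction from the degree-$2$ layer $L/F_1$ to descend these to a sum of symbols over $F_1$, the correction term lying in $\mathrm{Br}(L/F_1)$, which is a single symbol. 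Conceptually the whole argument is packaged by Kato's isomorphism identifying ${}_2\mathrm{Br}(F)$ with a quotient of the absolute differentials $\Omega^1_F$ under which $[a,b)$ corresponds to $a\,\tfrac{db}{b}$: since $\Omega^1_F$ is spanned by such logarithmic elements, generation by symbols is transparent once the isomorphism is in hand.

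The main obstacle is the stability of the symbol subgroup under corestriction, namely the lemma that $\co_{L/F}$ of a quaternion symbol over a finite separable extension $L/F$ is Brauer-equivalent to a tensor product of quaternion algebras over $F$; this is what powers both the odd-degree descent from $F_1$ to $F$ and the bookkeeping in the $2$-group d\'evissage, and it is the genuinely nontrivial input (handled via the projection formula and bilinearity of the symbol, equivalently via the surjectivity half of Kato's isomorphism). The remaining ingredients — the index/exponent comparison, the existence of a separable splitting field of degree equal to the index, the cyclic identification $\mathrm{Br}(K/F)\simeq F^\times/\nr_{K/F}(K^\times)$, and the solvable filtration of the $2$-group $P$ — are standard, so the crux is precisely the corestriction-of-symbols statement together with assembling the d\'evissage into a finite sum of quaternion classes.
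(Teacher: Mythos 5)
The paper itself offers no argument for this statement: it is Teichm\"uller's theorem, quoted from Gille--Szamuely \cite[(9.1.4)]{gille}, so your attempt has to be judged against the standard proof, which runs through \emph{purely inseparable} splitting fields. Your scaffolding (reduction to generation of ${}_2\mathrm{Br}(F)$ by quaternion classes, the odd-degree Sylow reduction via $\mathrm{cor}\circ\mathrm{res}=[F_1:F]$, the cyclic base case via $\mathrm{Br}(K/F)\simeq F^\times/\mathrm{N}_{K/F}(K^\times)$) is sound, but the inductive descent inside the $2$-group d\'evissage has a genuine gap. From ``$\mathrm{res}_{L/F_1}[A]$ is a sum of symbols over $L$'' you need a sum of symbols \emph{over $F_1$} whose restriction to $L$ agrees with $[A]_L$; only then does inflation--restriction confine the error to the single symbol in $\mathrm{Br}(L/F_1)$. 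Corestriction cannot manufacture such a lift: $\mathrm{cor}_{L/F_1}([A]_L)=2[A]=0$, and if instead you corestrict the individual symbols $s_i$ with $\sum_i s_i=[A]_L$, then $\mathrm{res}_{L/F_1}\bigl(\sum_i\mathrm{cor}_{L/F_1}(s_i)\bigr)=(1+\tau)[A]_L=2[A]_L=0$, where $\tau$ generates $\mathrm{Gal}(L/F_1)$; the class you build lies in $\mathrm{Br}(L/F_1)$ and retains no information about $[A]$. So the corestriction-of-symbols lemma, even granted, does not power the descent. A sanity check that no such formal repair exists: your argument uses characteristic two nowhere essentially, and run verbatim with Kummer quadratic layers in characteristic zero it would give an elementary proof of Merkurjev's theorem --- no such elementary argument is known, and the known obstruction is exactly this descent step. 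Finally, appealing to ``the surjectivity half of Kato's isomorphism'' is circular: surjectivity of $a\,db/b\mapsto[a,b)$ onto ${}_2\mathrm{Br}(F)$ \emph{is} the statement being proved.

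What rescues the theorem in characteristic two --- and what the proof cited by the paper exploits --- is inseparability, which makes precisely your problematic step trivial. By classical results of Albert and Hochschild, every class $\alpha$ of exponent $2$ is split by a finite extension $F(\sqrt{b_1},\dots,\sqrt{b_m})$ with $b_i\in F^\times$ (restriction to $F^{1/2}$ is, up to the Frobenius identification, multiplication by $2$), and $\mathrm{Br}(F(\sqrt{b})/F)=\{[a,b):a\in F\}$. One inducts on $m$, proving the sharper statement that $\alpha=\sum_{i=1}^m[a_i,b_i)$ with the prescribed $b_i$ as second slots: over $F_1=F(\sqrt{b_1})$ the hypothesis gives $\alpha_{F_1}=\sum_{i\geqslant2}[a_i,b_i)$ with $a_i\in F_1$, and since $a^2=a+\wp(a)$ yields the identity $[a,b)=[a^2,b)$ while squaring maps $F_1$ \emph{into} $F$ (for $a=c+d\sqrt{b_1}$ one has $a^2=c^2+d^2b_1\in F$), each summand is the restriction of a symbol defined over $F$; the difference then lies in $\mathrm{Br}(F_1/F)$ and is one further symbol $[a_1,b_1)$. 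Your separable tower stalls exactly because squaring does not carry a separable quadratic extension into its base field. Note also that the inseparable route needs no Sylow reduction and no corestriction whatsoever, so the lemma you identify as the ``genuinely nontrivial input'' is not merely hard --- it is the wrong crux.
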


\begin{rem}\label{sep}
Let $A$ be a central simple algebra of exponent two over $F$.
Then there exists a chain $K_0\subseteq K_1\subseteq\cdots\subseteq K_m$ of fields with $K_0=F$ such that $K_m$ splits $A$ and every $K_i/K_{i-1}$ is a quadratic separable extension.
Indeed, by Theorem \ref{mer}, $A$ is Brauer equivalent to a tensor product of quaternion $F$-algebras $\bigotimes_{i=1}^nQ_i$.
Hence, the required chain can be constructed inductively as follows:
set $K_0=F$ and suppose that $K_j$ is constructed.
If $K_j$ splits all $Q_i$, then it splits $A$ and we are done.
Otherwise, let $r$ be the minimal index for which $K_j$ does not split $Q_r$.
Then $Q_r\otimes_FK_j$ is a quaternion division $K_j$-algebra and $K_{j+1}\hookrightarrow Q_r\otimes_FK_j$ can be chosen as any quadratic separable extension of $K_j$.
\end{rem}

We are now ready to prove the implication $(3)\Rightarrow(1)$ in conjecture \ref{con}.
\begin{thm}\label{main3}
For a central simple algebra of degree $2^n$ with anisotropic ortho\-gonal involution $(A,\sigma)$ over $F$, the following statements are equivalent.
\begin{itemize}
  \item [(1)] $(A,\sigma)$ is totally decomposable.
  \item [(2)] For every field extension $K/F$, $(A,\sigma)_K$ is either anisotropic or metabolic.
\end{itemize}
\end{thm}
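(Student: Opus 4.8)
The equivalence breaks into two implications, of which $(1)\Rightarrow(2)$ is already known: an anisotropic totally decomposable orthogonal involution is anisotropic or metabolic over every extension by \cite[(6.2)]{dolphin}. The content is therefore $(2)\Rightarrow(1)$, which I would prove by induction on $\ind A$. Since $\sigma$ is of the first kind, $A$ has exponent dividing $2$, and two cases serve as the base of the induction. If $A$ is split, then $(A,\sigma)\simeq\ad(\mathfrak{b})$ with $\mathfrak{b}$ anisotropic of dimension $2^n$; hypothesis $(2)$ makes $\mathfrak{b}$ anisotropic or metabolic over every extension, so $\mathfrak{b}$ is similar to a bilinear Pfister form by \cite[(5.5)]{lagh}. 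As a Pfister form is a tensor product of binary forms and the adjoint construction is multiplicative, $(A,\sigma)$ is then a tensor product of quaternion algebras with involution, hence totally decomposable. If $\ind A=2$, i.e. $A$ is Brauer-equivalent to a quaternion algebra, then $(2)\Rightarrow(1)$ is exactly Corollary \ref{main2}.

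For the inductive step suppose $\exp A=2$ and $\ind A\geqslant 4$. By Theorem \ref{mer} and the construction of Remark \ref{sep} I may embed a separable quadratic subfield of a quaternion division factor of $A$, obtaining a separable quadratic extension $K/F$ with $\ind A_K<\ind A$. Hypothesis $(2)$ is inherited by $K$, because every extension of $K$ is an extension of $F$. The mechanism that moves total decomposability between the two levels is Proposition \ref{prop}: it is an equivalence requiring anisotropy only of the lower field, so as long as $(A,\sigma)$ is anisotropic over $F$ it transfers total decomposability in both directions across $K/F$.

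If $(A,\sigma)_K$ is anisotropic, the induction applies over $K$, where the index is strictly smaller, and Proposition \ref{prop} descends the resulting total decomposability back to $F$; this is the routine case. The difficulty is entirely concentrated in the possibility that $(A,\sigma)_K$ is metabolic. Then the inductive hypothesis, which speaks only of anisotropic involutions, is unavailable over $K$, and Proposition \ref{prop} cannot be iterated upward from $K$ because its anisotropy hypothesis now fails over the base. Organising the separable quadratic tower $F=K_0\subseteq\cdots\subseteq K_m$ of Remark \ref{sep} up to a splitting field $K_m$ and letting $j$ be the last level at which $(A,\sigma)_{K_j}$ is anisotropic makes this precise: everything above $K_j$ is metabolic, the descent through the anisotropic part $K_j\to\cdots\to F$ is free by Proposition \ref{prop}, and the whole theorem reduces to proving total decomposability of the metabolic involution $(A,\sigma)_{K_{j+1}}$.

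This metabolic object is the main obstacle. To treat it I would push $(A,\sigma)_{K_{j+1}}$ along the remaining separable quadratic steps up to $K_m$, where it becomes $\ad(\mathfrak{b})$ with $\mathfrak{b}$ metabolic; the key point is that $\mathfrak{b}$ is constrained, being obtained from a form that is anisotropic at the level $K_j$ and becomes metabolic over the separable quadratic extension $K_{j+1}/K_j$, hence divisible by the norm form of that extension. Feeding this divisibility into hypothesis $(2)$ --- the same circle of ideas that produces \cite[(5.5)]{lagh} and the split metabolic step \cite[(8.3)]{dolphin} used inside Corollary \ref{main2} --- should force $\mathfrak{b}$ to be similar to a Pfister form and so make $(A,\sigma)_{K_{j+1}}$ totally decomposable, after which Proposition \ref{prop} finishes the descent to $F$. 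The hardest point, where I expect the real effort to lie, is to run this norm-form argument while $A_{K_{j+1}}$ is still nonsplit, rather than only after passing to the splitting field $K_m$ where the algebra information has been lost.
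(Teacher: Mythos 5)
Your overall architecture (reduce to separable quadratic steps via Remark \ref{sep}, use Proposition \ref{prop} to transfer total decomposability down the tower, handle the split and index-$2$ base cases via \cite[(5.5)]{lagh} and Corollary \ref{main2}) is sound and close to the paper's. But you have correctly isolated, and then left open, the one point that actually needs an argument: the case where $(A,\sigma)$ becomes metabolic at some level $K_{j+1}$ of the tower. Your proposed treatment of that case --- pushing up to the splitting field, extracting a metabolic form divisible by the norm form of $K_{j+1}/K_j$, and hoping hypothesis (2) forces it to be similar to a Pfister form --- is speculative, and you say yourself that the essential difficulty (running the argument while $A_{K_{j+1}}$ is nonsplit) is unresolved. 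As written, this is a genuine gap: the implication $(2)\Rightarrow(1)$ is not proved.

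The missing idea is that the metabolic case never occurs, so the ``main obstacle'' is vacuous. The paper invokes \cite[(5.6)]{dolphin}: if $(A,\sigma)_{K_r}$ is metabolic for a separable quadratic extension $K_r/K_{r-1}$, then $(A,\sigma)_{K_{r-1}}$ is already isotropic. Taking $r$ minimal with $(A,\sigma)_{K_r}$ isotropic, hypothesis (2) upgrades isotropic to metabolic, and this descent result contradicts either the minimality of $r$ or the anisotropy of $\sigma$ over $F$. Hence every $(A,\sigma)_{K_i}$ in the tower is anisotropic, $(A,\sigma)_{K_m}$ is totally decomposable by \cite[(8.3)]{dolphin}, and Proposition \ref{prop} applies at every step of the descent with its anisotropy hypothesis satisfied. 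Replacing your norm-form sketch with this one citation closes the argument; without it (or an equivalent descent-of-isotropy statement for separable quadratic extensions), your proof does not go through.
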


\begin{proof}
The implication $(1)\Rightarrow (2)$ follows from \cite[(6.2)]{dolphin}.
To prove the converse, observe that by \cite[(3.1)]{knus}, the exponent of $A$ is at most $2$.
 Hence, by Remark \ref{sep} there exists a chain $F\subseteq K_1\subseteq\cdots\subseteq K_m$ of fields such that every $K_i/K_{i-1}$ is a quadratic separable extension and $K_m$ splits $A$.
By \cite[(8.3)]{dolphin}, the pair $(A,\sigma)_{K_m}$ is totally decomposable.
We claim that $(A,\sigma)_{K_i}$ is anisotropic for $i=1,\cdots,m$.
Suppose the contrary; let $r\geqslant1$ be the minimal index for which $(A,\sigma)_{K_r}$ is isotropic.
The assumption implies that $(A,\sigma)_{K_r}$ is metabolic.
Hence, $(A,\sigma)_{K_{r-1}}$ is isotropic by \cite[(5.6)]{dolphin}, contradicting the minimality of $r$.
The claim is therefore proved.
By Proposition \ref{prop} and induction on $m$, the pair $(A,\sigma)$ is totally decomposable.
\end{proof}

\begin{rem}
Let $(A,\sigma)$ be a central simple $F$-algebra of degree $2^n$ with anisotropic orthogonal involution.
Suppose that for all field extensions $K/F$, $(A,\sigma)_K$ is either anisotropic or metabolic.
In \cite[(8.3)]{dolphin}, it is shown that for every field extension $L/F$ such that $A_L$ splits, there exists a bilinear Pfister form $\mathfrak{b}$ over $L$ such that $(A,\sigma)_L\simeq\ad(\mathfrak{b})$.
It is also asked whether there exists a bilinear Pfister form $\mathfrak{b}'$ over $F$ such that $\mathfrak{b}\simeq\mathfrak{b}'_L$ (see \cite[(8.4)]{dolphin}).
Using Theorem \ref{main3} one can find an affirmative answer to this question.
Indeed, by Theorem \ref{main3}, $(A,\sigma)$ is totally decomposable, hence $(A,\sigma)_L\simeq\ad(\mathfrak{Pf}(A,\sigma)_L)$ by \cite[(7.5 (3))]{dolphin}.
It follows from \cite[(4.2)]{knus} that $\mathfrak{b}$ is similar to $\mathfrak{Pf}(A,\sigma)_L$.
Since $\mathfrak{Pf}(A,\sigma)_L$ and $\mathfrak{b}$ are both Pfister forms, these forms are actually  isometric.
\end{rem}

We conclude by proving Conjecture \ref{con} for direct involutions.
\begin{thm}\label{main2}
Let $(A,\sigma)$ be a central simple $F$-algebra of degree $2^n$ with orthogonal involution.
If $\sigma$ is direct, then the following statements are equivalent.
\begin{itemize}
  \item [(1)] $(A,\sigma)$ is totally decomposable.
  \item [(2)] If $K$ is a splitting field of $A$, then $(A,\sigma)_K$ is adjoint to bilinear Pfister form.
  \item [(3)] For every field extension $L/F$, $(A,\sigma)_L$ is either anisotropic or metabolic.
\end{itemize}
\end{thm}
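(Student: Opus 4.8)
The plan is to deduce the equivalence from the two results already available, isolating the single genuinely new point, which concerns the passage between condition $(2)$ and total decomposability. Since a direct involution is anisotropic, Theorem \ref{main3} applies verbatim to $(A,\sigma)$ and yields the equivalence $(1)\Leftrightarrow(3)$. Moreover $(1)\Rightarrow(2)$ is \cite[(3.6)]{mn}. Thus everything reduces to proving $(2)\Rightarrow(1)$, and this is the step that uses the directness hypothesis.

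The first and decisive step is to show that directness descends along the tower of separable quadratic extensions produced by Remark \ref{sep}. Concretely, I would prove: \emph{if $\sigma$ is direct and $L=F(\eta)$ with $\eta^2+\eta=\delta\in F$ is separable quadratic, then $\sigma_L$ is direct.} Writing an arbitrary $a\in A_L$ as $a=a_0\otimes1+a_1\otimes\eta$ with $a_0,a_1\in A$ and using $\eta^2=\eta+\delta$, a direct expansion gives
\[\sigma_L(a)a=\bigl(\sigma(a_0)a_0+\delta\,\sigma(a_1)a_1\bigr)\otimes1+\bigl(\sigma(a_0)a_1+\sigma(a_1)a_0+\sigma(a_1)a_1\bigr)\otimes\eta.\]
Since $\alt((A,\sigma)_L)=\alt(A,\sigma)\otimes_FL$, membership of $\sigma_L(a)a$ in $\alt((A,\sigma)_L)$ forces each component into $\alt(A,\sigma)$. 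Now $\sigma(a_0)a_1+\sigma(a_1)a_0=x+\sigma(x)\in\alt(A,\sigma)$ with $x=\sigma(a_0)a_1$, so the $\eta$-component condition collapses to $\sigma(a_1)a_1\in\alt(A,\sigma)$; directness of $\sigma$ gives $a_1=0$. The remaining condition is then $\sigma(a_0)a_0\in\alt(A,\sigma)$, whence $a_0=0$. Thus $a=0$ and $\sigma_L$ is direct, in particular anisotropic.

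With this lemma, $(2)\Rightarrow(1)$ follows by descent. By \cite[(3.1)]{knus} the algebra $A$ has exponent at most two, so Remark \ref{sep} furnishes a chain $F=K_0\subseteq K_1\subseteq\cdots\subseteq K_m$ of separable quadratic extensions with $K_m$ splitting $A$. Applying the lemma inductively shows that $\sigma_{K_i}$ is direct, hence anisotropic, for every $i$. Since $K_m$ splits $A$, hypothesis $(2)$ gives $(A,\sigma)_{K_m}\simeq\ad(\mathfrak{b})$ for a bilinear Pfister form $\mathfrak{b}=\lla\alpha_1,\cdots,\alpha_n\rra$; as $\ad(\mathfrak{b})\simeq\bigotimes_{i=1}^n\ad(\lla\alpha_i\rra)$, this split algebra is totally decomposable. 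I then descend the tower one quadratic step at a time: at the step $K_i/K_{i-1}$ the base involution $\sigma_{K_{i-1}}$ is anisotropic and $A$ has degree $2^n$, so Proposition \ref{prop} transfers total decomposability from $(A,\sigma)_{K_i}$ to $(A,\sigma)_{K_{i-1}}$. Iterating down to $K_0=F$ yields $(A,\sigma)$ totally decomposable, completing $(2)\Rightarrow(1)$ and hence the equivalence of $(1)$, $(2)$, $(3)$.

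I expect the descent of directness to be the main obstacle, and it is precisely the point where the hypothesis is used: in Theorem \ref{main3} the intermediate anisotropy was supplied by condition $(3)$ together with \cite[(5.6)]{dolphin}, but condition $(2)$ only constrains $(A,\sigma)$ over splitting fields, which the intermediate $K_i$ need not be when $\ind A>2$. The directness-preservation lemma replaces that mechanism and is what makes Proposition \ref{prop} applicable at every stage. The only other point meriting care is the elementary fact that a split algebra adjoint to a Pfister form is totally decomposable, which is the easy half of the Pfister Factor Conjecture.
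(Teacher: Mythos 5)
Your proof is correct and follows essentially the same route as the paper: reduce to $(2)\Rightarrow(1)$, take the tower of separable quadratic extensions from Remark \ref{sep}, establish anisotropy of $\sigma_{K_i}$ at every stage via directness, note that $(A,\sigma)_{K_m}$ is totally decomposable because it is adjoint to a Pfister form, and descend with Proposition \ref{prop}. The only difference is that where the paper cites \cite[(9.1)]{dolphin2} for the fact that directness is preserved under separable quadratic extensions, you prove it by the explicit (and correct) expansion of $\sigma_L(a)a$ in the basis $\{1,\eta\}$ with $\eta^2+\eta=\delta$, which makes the argument self-contained.
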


\begin{proof}
Since $\sigma$ is direct, it is anisotropic.
Hence, the implications $(1)\Rightarrow(2)$, $(1)\Rightarrow(3)$ and $(3)\Rightarrow(1)$ follow from \cite[(3.6)]{mn}, \cite[(6.2)]{dolphin} and Theorem \ref{main3}.
To prove $(2)\Rightarrow(1)$ we proceed similar to the proof of Theorem \ref{main3}.
Using Remark \ref{sep}, one can choose a chain $F\subseteq K_1\subseteq\cdots\subseteq K_m$ of fields such that every $K_i/K_{i-1}$ is a quadratic separable extension and $K_m$ splits $A$.
By \cite[(9.1)]{dolphin2} (and induction on $m$), $(A,\sigma)_{K_i}$ is anisotropic for $i=1,\cdots,m$.
Since $(A,\sigma)_{K_m}$ is totally decomposable by \cite[(8.3)]{dolphin}, the conclusion follows from Proposition \ref{prop} and induction on $m$.
\end{proof}
\footnotesize

\scriptsize{
\noindent
A.-H. Nokhodkar, {\tt
  a.nokhodkar@kashanu.ac.ir},\\
Department of Pure Mathematics, Faculty of Science, University of Kashan, P.~O. Box 87317-51167, Kashan, Iran.}


\begin{thebibliography}{MM}
\bibitem{bayer} E. Bayer-Fluckiger, R. Parimala, A. Qu\'eguiner-Mathieu, Pfister involutions. {\it Proc. Indian Acad. Sci. Math. Sci.} {\bf 113} (2003), no. 4, 365--377.

\bibitem{becher} K. J. Becher, A proof of the Pfister factor conjecture. {\it Invent. Math.} {\bf 173} (2008), no. 1, 1--6.

\bibitem{berhuy1}
G. Berhuy,  C. Frings,  J.-P. Tignol, Galois cohomology of the classical groups over imperfect fields.
{\it J. Pure Appl. Algebra} {\bf 211} (2007), no. 2, 307--341.

\bibitem{black} J. Black, A. Qu\'eguiner-Mathieu, Involutions, odd degree extensions and generic splitting. {\it Enseign. Math.} {\bf 60} (2014), no. 3-4, 377--395.

\bibitem{dolphin2} A. Dolphin, Decomposition of algebras with involution in characteristic $2$. {\it J. Pure Appl. Algebra} {\bf 217} (2013), no. 9, 1620--1633.

\bibitem{dol} A. Dolphin, Metabolic involutions. {\it J. Algebra} {\bf 336} (2011), 286--300.

\bibitem{dolphin} A. Dolphin, Orthogonal Pfister involutions in characteristic two. {\it J. Pure Appl. Algebra} {\bf 218} (2014), no. 10, 1900--1915.

\bibitem{draxl} P. K. Draxl, {\it Skew fields}, London Mathematical Society Lecture Note Series, 81. Cambridge University Press, Cambridge, 1983.

\bibitem{elman}
R. Elman, N. Karpenko, A. Merkurjev,
{\it The algebraic and geometric theory of quadratic forms.}
American Mathematical Society Colloquium Publications, 56. American Mathematical Society, Providence, RI, 2008.

\bibitem{gille}
P. Gille, T. Szamuely, {\it Central simple algebras and Galois cohomology.} Cambridge Studies in Advanced Mathematics, 101. Cambridge University Press, Cambridge, 2006.

\bibitem{kar} N. Karpenko, Hyperbolicity of orthogonal involutions. With an appendix by Jean-Pierre Tignol. {\it Doc. Math.} 2010, Extra volume: Andrei A. Suslin sixtieth birthday, 371--392.

\bibitem{knus} M.-A. Knus, A. S. Merkurjev, M. Rost, J.-P. Tignol, {\it The book of involutions}. American Mathematical Society Colloquium Publications, 44. American Mathematical Society, Providence, RI, 1998.

\bibitem{lagh}  A. Laghribi, Witt kernels of function field extensions in characteristic $2$, {\it J. Pure Appl. Algebra} {\bf 199} (2005) 167–-182.

\bibitem{mn} M. G. Mahmoudi, A.-H. Nokhodkar, On split products of quaternion algebras with involution in characteristic two.
{\it J. Pure Appl. Algebra} {\bf 218} (2014), no. 4, 731--734.

\bibitem{mn2} M. G. Mahmoudi, A.-H. Nokhodkar, On totally decomposable algebras with involution in characteristic two. {\it J. Algebra} {\bf 451} (2016), 208--231.

\bibitem{me} A.-H. Nokhodkar, Quadratic descent of totally decomposable orthogonal involutions in characteristic two.
{\it J. Pure Appl. Algebra} {\bf 221} (2017), no.~4, 948--959. 

\bibitem{shapiro} D. B. Shapiro, {\it Compositions of quadratic forms.} de Gruyter Expositions in Mathematics, 33. Walter de Gruyter \& Co., Berlin, 2000.
\end{thebibliography}
\end{document}